\documentclass[11.5pt]{amsart}
\usepackage{graphicx,amssymb,amsmath,amsthm}
\usepackage{comment}
\usepackage{enumerate}
\usepackage{dsfont}
 \numberwithin{equation}{section}
\usepackage{mathrsfs}
\usepackage{epsfig}
\usepackage{lscape}
\usepackage{subfigure}
\usepackage{graphicx}
\usepackage{epstopdf}
\usepackage{color}
\usepackage{caption}
\usepackage{algorithm}
\usepackage{algpseudocode}
\usepackage{bm}
\usepackage{graphicx}
\usepackage[colorlinks,
            linkcolor=blue,
            anchorcolor=blue,
            citecolor=blue,
            urlcolor=blue,
            breaklinks=true
            ]{hyperref}

\newcommand{\DOI}[1]{DOI: \href{https://doi.org/#1}{#1}}

\allowdisplaybreaks[4]

\textheight 8.0in
\textwidth 6.00in
\topmargin -0.25in
\oddsidemargin 0.25in
\evensidemargin 0.25in
\parskip 1.0ex

\newcommand{\abs}[1]{\lvert#1\rvert}

\newcommand{\ve}{{\boldsymbol \eta}}

\renewcommand{\omega}{\ve}

\newcommand{\RNum}[1]{\uppercase\expandafter{\romannumeral #1\relax}}

\newtheorem{theorem}{Theorem}[section]
\newtheorem{lemma}{Lemma}[section]
\newtheorem{corollary}{Corollary}[section]

\newtheorem{remark}{Remark}[section]

\numberwithin{equation}{section}

\date{}

\begin{document}

\baselineskip 18pt

\title{Stability of Least Squares Approximation under Random Sampling}

\author{Zhiqiang Xu}
\thanks{Zhiqiang Xu is supported  by the National Science Fund for Distinguished Young Scholars (12025108), National Natural Science Foundation of China (12471361, 12021001, 12288201) and National Key R\&D Program of China (2023YFA1009401).}
\address{State Key Laboratory of Mathematical Sciences, Academy of Mathematics and Systems Science, Chinese Academy of Sciences, Beijing 100190, China; School of Mathematical Sciences, University of Chinese Academy of Sciences, Beijing, 100049, China}
\email{xuzq@lsec.cc.ac.cn}

\author{Xinyue Zhang}
\address{State Key Laboratory of Mathematical Sciences, Academy of Mathematics and Systems Science, Chinese Academy of Sciences, Beijing 100190, China; School of Mathematical Sciences, University of Chinese Academy of Sciences, Beijing, 100049, China}
\email{Corressponding author: zhangxinyue@amss.ac.cn}

 \begin{abstract}
    This paper investigates the stability of the least squares approximation $P_m^n$ within the univariate polynomial space of degree $m$, denoted by ${\mathbb P}_m$. The approximation $P_m^n$ entails identifying  a polynomial in ${\mathbb P}_m$ that approximates a function $f$ over a domain $X$ based on samples of $f$ taken at $n$ randomly selected points, according to a specified probability measure $\rho_X$.            
    The primary goal is to determine the sampling rate necessary to ensure the stability of $P_m^n$.
    Assuming the sampling points are i.i.d. with respect to a Jacobi weight function, we present the sampling rate that guarantee the stability of $P_m^n$.
    Specifically, for uniform random sampling, we demonstrate that a sampling rate of $n \asymp m^2$ is required to maintain stability. By combining these findings with those of Cohen-Davenport-Leviatan, we conclude that, for uniform random sampling, the optimal sampling rate for guaranteeing the stability of $P_m^n$ is $n \asymp m^2$, up to a $\log n$ factor. 
    Motivated by this result, we extend the impossibility theorem, previously applicable to equally spaced samples,   to the case of random samples, illustrating the balance between accuracy and stability in recovering analytic functions. 
    
 \end{abstract}

\keywords{Least squares, Polynomial approximation, Stability analysis, Random sampling}

\subjclass{41A10, 41A25, 42C05, 65D10, 93D09, 93E24}

\maketitle

\section{Introduction}
\subsection{Problem setup} We consider the classical problem of approximating an unknown function $f$ on $X:=[-1,1]$ from its evaluations at the random sampling points $\{x_i\}_{i=1}^{n} \subset X$, where the $\{x_i\}_{i=1}^n$ are independent and identically distributed (i.i.d.) with respect to a given probability measure $\rho_X$. Such problem, especially when $f$ is a multivariate function, comes from the field of uncertainty quantification (UQ) \cite{sullivan2015introduction,xiu2009fast} where the input parameters used to describe the computational model are considered as random variables. For specific approximation methods, we concentrate on the polynomial discrete least squares. In the framework of UQ, this approach serves as a tool to build the generalized polynomial chaos expansion, aiming to expand the solution in polynomials of the input random variables \cite{eldred2009recent}. It is also known as polynomial regression \cite{draper1998applied}, widely used in optimal experimental design \cite{brook2018applied,jobson2012applied}, statistical learning theory \cite{vapnik1998statistical} and PDEs with stochastic data \cite{migliorati2013polynomial,migliorati2013approximation,chkifa2015discrete}. 

Specifically, let $\mathbb{P}_{m}$ be the space of univariate algebraic polynomials of degree at most $m$. The best approximation in $\mathbb{P}_{m}$ to a function $f$, measured by the $L^2(X,\rho_X)$ norm $\|f\| := \left( \int_X {\abs{f(x)}^2} {\rm d}\rho_X(x) \right)^{1/2}$, is
$$
P_m(f) := \mathop{\arg\min}\limits_{p \in \mathbb{P}_{m}}{\|p-f\|}.
$$
And its discrete least squares approach takes the form
\begin{equation}
    \label{least_squares}
    P_m^n(f) := \mathop{\arg\min}\limits_{p \in \mathbb{P}_{m}}{\|p-f\|_{n,2}},
\end{equation}
where $\|f\|_{n,2} := \left(\frac{1}{n} \sum_{i=1}^n{|f(x_i)|^2}\right)^{1/2}$ is the empirical $L^2(X,\rho_X)$ semi-norm with the associated empirical semi-inner product $\left<f,g\right>_n := \frac{1}{n} \sum_{i=1}^n{f(x_i)\overline{g(x_i)}}$, 
i.e., the operator $P_m^n$ is the orthogonal projector onto $\mathbb{P}_{m}$ with respect to the empirical semi-inner product $\left<\cdot,\cdot\right>_n$. We assume $m < n$ to ensure the almost certain uniqueness of the solution.

In this paper, our primary focus lies in examining the stability of $P_m^n$ under random sampling as  $m, n \rightarrow \infty$. This stability is quantified using the condition number, as originally defined in \cite{platte2011impossibility}, but with the norms replaced. Specifically, the condition number $\kappa_{2}(P_m^n)$ is given by:
\begin{equation}
	\label{cond_2}
	\kappa_{2}\left(P_m^n\right):=\sup _{f:X \rightarrow \mathbb{R}} \lim _{\delta \rightarrow 0^{+}} \sup _{\substack{h:X \rightarrow \mathbb{R} \\ 0<\|h\|_{n, 2} \leq \delta}} \frac{\left\|P_m^n(f+h)-P_m^n(f)\right\|}{\|h\|_{n, 2}} = \sup _{\substack{h:X \rightarrow \mathbb{R} \\ \|h\|_{n, 2} \neq 0}} \frac{\left\|P_m^n(h)\right\|}{\|h\|_{n, 2}},
\end{equation}
where the second equality is due to the linearity of the operator $P_m^n$.
We say that the discrete least squares approximation $P_m^n$ is {\em stable} if and only if $\kappa_{2}\left(P_m^n\right)$ can be bounded by some constants independent of $m$ and $n$, with high probability.

It is worth noting that in equation \eqref{cond_2}, the value of $\kappa_{2}\left(P_m^n\right)$ is invariant under replacing the supremum over all real-valued functions $f$ and $h$ by that over all complex-valued ones. This is because the stability of $P_m^n$ is independent of the particular function space to which $f$ and $h$ belong, provided that this space contains the approximation space $\mathbb{P}_{m}$. As evidenced in Lemma \ref{eq} below, it depends only on the following factors: the approximation space $\mathbb{P}_{m}$, the point set $\{x_i\}_{i=1}^n$, and the sampling rate, that is, the relation between the number of sampling points and the dimension of $\mathbb{P}_m$.

\subsubsection{Impact of sampling rate on stability}
A lot of work has been done on the stability of  $P_m^n$  \cite{migliorati2014analysis,cohen2013stability,migliorati2013approximation,migliorati2013polynomial,migliorati2015convergence,chkifa2015discrete}. Cohen, Davenport and Leviatan \cite{cohen2013stability} consider the domain $X\subset \mathbb{R}^d$ and the general linear subspace $V_m$ of $L^2(X,\rho_X)$, with ${\rm dim}(V_m)=m\leq n$, as the approximation space instead of $\mathbb{P}_m$, and gives the sufficient sampling rates to keep $P_m^n$ stable with high probability. 
Suppose that $\{L_j\}_{j=1}^{m}$ is an orthonormal basis of $V_m$ in the sense of $L^2(X,\rho_X)$.
We can express $P_m^n(f)$ where $f:X\rightarrow \mathbb{R}$ as:
$$
P_m^n(f) = \sum_{j=1}^{m}{u_j L_j},
$$
where $\mathbf{u} = (u_1,\dotsc,u_m)^\top$ is the solution of the $m \times m$ system
\begin{equation}
    \label{system}
    \mathbf{Gu=b}
\end{equation}
with $\mathbf{G}:=(\left<L_j, L_k\right>_n)_{j,k=1,\dotsc,m}$ and $\mathbf{b}:=(\left<f, L_k\right>_n)_{k=1,\dotsc,m}$. 
The results on stability in \cite{cohen2013stability} is obtained by a probabilistic estimate of the proximity of the
matrices $\mathbf{G}$ and $\mathbf{I}$ in spectral norm $|||\cdot|||$.
\begin{theorem}[{\cite[Theorem 1]{cohen2013stability}}]
	\label{cohen}
    Let $X$ be a domain of $\mathbb{R}^d$ and $\rho_X$ be a probability measure on $X$. Given a set of random samples $\{x_i\}_{i=1}^{n}\subset X$ i.i.d. with respect to $\rho_X$, and a linear subspace $V_m$ of $L^2(X,\rho_X)$ with ${\rm dim}(V_m)=m \leq n$, consider the discrete least squares projection $P_m^n$ as in \eqref{least_squares} with $\mathbb{P}_m$ replaced by $V_m$.
    Let
	\begin{equation}
		\label{quantity}
		K(m):=\sup_{x \in X} \sum_{j=1}^{m}{|L_j(x)|^2}
	\end{equation}
	where $\{L_j\}_{j=1}^{m}$ is an orthonormal basis of $V_m$ in the sense of $L^2(X,\rho_X)$. Then for any $r > 0$, if $m$ is such that
	\begin{equation*}
		K(m) \leq \iota \frac{n}{\log n}, \quad \text { with } \quad \iota:=\frac{1-\log 2}{2+2 r},
	\end{equation*}
	we have
	$$
    P\left(|||\mathbf{G}-\mathbf{I} ||| \leq \frac{1}{2}\right) \geq 1-2 n^{-r}
    $$
    where $\mathbf{G}$ is the matrix as in \eqref{system}, and for any function $f:X\rightarrow \mathbb{R}$, 
    $$
    \|P_m^n(f)\| \leq \sqrt{6}\;\|f\|_{n,2}
    $$
	with probability at least $1-2n^{-r}$.
\end{theorem}

The value of $K(m)$ is solely determined by the choice of the space $V_m$ and the measure $\rho_X$, as mentioned in \cite{cohen2013stability}. Now we turn to the case of $X=[-1,1]$, $V_{m+1} = \mathbb{P}_{m}$ and consider the uniform random sampling (${\rm d}\rho_X=\frac{1}{2}{\rm d}x$). Let $L_k$ denote the Legendre polynomial of degree $k-1$ with normalization. As is well-known, ${L}_k$, $k=1,\ldots, m+1$ forms an orthonormal basis of $\mathbb{P}_{m}$ in the $L^2(X,\rho_X)$ sense. Additionally, we have $\|L_k\|_\infty = |L_k(1)| = \sqrt{2k-1}$ for $k=1,\dotsc,m+1$. Thus, 
$
K(m+1) = \sum_{k=1}^{m+1}(2k-1) = (m+1)^2.
$
Then we simply deduce from Theorem \ref{cohen} that when 
$$
n \asymp m^2 \quad \text{as} \;\; n \rightarrow \infty,
$$
disregarding both constant factors and factors of $\log n$, the \emph{condition number} for $P_m^n$ as in \eqref{cond_2} satisfies 
$$
\kappa_2(P_m^n) \lesssim 1
$$ 
with high probability. 
Throughout this paper, we utilize the asymptotic notation $a_n \asymp b_n$ as $n \rightarrow \infty$ if there exist positive constants $c_1$ and $c_2$ such that $c_1 b_n \leq a_n \leq c_2 b_n$ holds for all large $n$. And we adopt the notation $A \lesssim B$ to denote that there exists a universal constant $c>0$ such that $A \leq cB$. We say that an event holds with high probability  if it occurs with probability at least $1 - C/n^s$ for some positive constants $C$ and $s$. 

Meanwhile, numerical experiments observe that when $n$ is less than $m^2$ in an asymptotic sense (or the sampling rate for alternative sampling measures is below a certain threshold), the least squares approximation may exhibit ill-conditioning and ultimately yield divergence \cite{migliorati2014analysis,boyd2009divergence}. 
However, it is important to note that although these empirical findings provide insights into the required sampling rates for achieving stability, there is still a lack of solid theoretical guarantees in this regard.  This gives rise to the central question that this article aims to address:

\textbf{Question \uppercase\expandafter{\romannumeral1}.} $\,$  {\em
For uniform random sampling, can we potentially reduce the sampling rate $n \asymp m^2$ while still ensuring stability of $P_m^n$ with high probability? In other words, can we guarantee the stability of $P_m^n$ with high probability even when the number of the uniform random sampling points is significantly less than $m^2$? 
}

Our focus is to establish the requisite conditions for random sampling rates under various sampling probability measures, to ensure the stability of plain polynomial least squares. Specifically, we investigate the so-called {\em Jacobi} weight functions (see  \cite{adcock2019optimal}), where the uniform measure serves as a special case. This investigation is motivated by the impossibility theorem outlined below.

Here, it's worth noting that to achieve nearly linear sampling complexity, numerous studies have employed the \emph{change of measure} strategy, that is, make the sampling measure denoted as $\rho_S$ different from the measure $\rho_T$ in which we evaluate the $L^2$ approximation error. See for instance \cite{cohen2017optimal,migliorati2019adaptive,adcock2020near,dolbeault2024randomized} where samples are drawn randomly from well-designed probability measures $\rho_S$ for any $\rho_T$, in conjunction with weighted least-squares method. Furthermore, to overcome the drawback of uniform quadratic oversampling required for the algebraic polynomial approximation space, some research endeavors \cite{adcock2023fast,adcock2020approximating} leverage polynomials over an extended interval $[-\gamma,\gamma]$, $\gamma>1$, to create an approximation on [-1,1], which is known as \emph{polynomial frame} approximation; alternative studies focus on constructing an orthonormal basis from spaces such as Sobolev space \cite{bartel2022error}, instead of the conventional usage of Legendre polynomials. Also, nearly linear oversampling is attainable by these approaches, albeit at a trade-off concerning the error decay rate. However, the aforementioned topics lie beyond the scope of our present paper.

\subsubsection{Impossibility theorem}
Setting $n=m$ in \eqref{least_squares}, least squares degenerates into polynomial interpolation, which is widely recognized to be exponential ill-conditioning at deterministic equally spaced points even when $f$ is analytic, given that the corresponding Lebesgue constants are approximately $2^n$ \cite{turetskii1940bounding}. It leads to divergence in floating-point arithmetic despite the theoretical convergence, which is commonly referred to as \emph{Runge’s phenomenon}. Numerous approximation methods have been investigated to mitigate the Runge effect, aiming to enhance accuracy and stability \cite{fasshauer2012stable,bos2012lebesgue}. In addition,  Platte, Trefethen and  Kuijlaars \cite{platte2011impossibility} offer valuable insights into the trade-off between these two factors, which is commonly referred to as the {\em impossibility theorem}. It is established in the context of deterministic sampling on $X=[-1,1]$ and the unknown function $f:X \rightarrow \mathbb{C}$ is analytic.

Specifically, consider an \emph{approximation procedure} $\{\phi_n\}_{n=1}^\infty$ defined as a family of mappings $\phi_n: C(X) \rightarrow C(X)$ for $1 \leq n < \infty$, where for any $f \in C(X)$, $\phi_n(f)$ depends solely on $\{f(x_i)\}_{i=1}^{n}$ with $\{x_i\}_{i=1}^{n}$ being the sampling nodes on $X$. The stability of $\{\phi_n\}_{n=1}^\infty$ that the impossibility theorem concerns is represented by the \emph{condition number}
\begin{equation}
    \label{cond_inf}
    \kappa_{\infty}\left(\phi_n\right):=\sup _{f \in C(X)} \lim _{\delta \rightarrow 0^{+}} \sup _{\substack{h \in C(X) \\ 0<\|h\|_{n, \infty} \leq \delta}} \frac{\left\|\phi_n(f+h)-\phi_n(f)\right\|_{\infty}}{\|h\|_{n, \infty}},
\end{equation}
where $\|\cdot\|_\infty$ is the supremum ($L^\infty(X)$) norm over $X$ and $\|f\|_{n,\infty} := \max_{i \in \{1,\dotsc,n\}}{\abs{f(x_i)}}$ is the discrete supremum semi-norm of a function $f$ on the sampling points. It simplifies to the Lebesgue constant if $\phi_n$ is linear.
When we regard the least squares approximation $P_m^n$ as $\phi_n$, as previously mentioned, the definition of the condition number $\kappa_2(P_m^n)$ in equation \eqref{cond_2} is derived from the general definition \eqref{cond_inf}, with the replacement of norms.
 Given a compact set $E$ in the complex plane, the Banach space $B(E)$ is defined as a space of functions that are continuous on $E$ and analytic in its interior. The norm $\|f\|_E$ of a function $f$ in $B(E)$ is given by $\sup_{z \in E}{\abs{f(z)}}$. Now, let's delve into the impossibility theorem.
 
\begin{theorem}[{\cite[Theorem 3.4]{platte2011impossibility}}]
	\label{impossibility}
	Let $E \subseteq \mathbb{C}$ be a compact set containing $X:=[-1,1]$ in its interior. Suppose that $\{\phi_n\}_{n=1}^\infty$ is an approximation procedure based on the equispaced grid of $n$ points $\{x_i\}_{i=1}^{n} = \{-1+\frac{2(i-1)}{n-1}\}_{i=1}^{n}\subset X$ such that, for some $M < \infty$, $\sigma>1$ and $1/2<\tau \leq 1$, 
	$$
	\left\|f-\phi_n(f)\right\|_{\infty} \leq M \sigma^{-n^\tau}\|f\|_E, \quad 1 \leq n < \infty,
	$$
    for all $f \in B(E)$. Then the condition numbers for $\phi_n$ satisfy
	$$
	\kappa_{\infty}\left(\phi_n\right) \geq C^{n^{2 \tau-1}},
	$$
	for some constant $C>1$ and all large $n$.
\end{theorem}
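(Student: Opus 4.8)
The plan is to reduce the stated accuracy/stability trade-off to a single extremal statement about polynomials, and then feed in a classical growth bound of Coppersmith--Rivlin type. The underlying mechanism is that the $n$ equispaced nodes are spaced $\asymp 1/n$, whereas a polynomial of degree $m$ ``resolves'' a neighbourhood of an endpoint of $X=[-1,1]$ only on the Chebyshev scale $\asymp 1/m^2$; once $m^2\gg n$ such a polynomial can be uniformly bounded on $X$ and yet exponentially small at every node, and an approximation procedure that is simultaneously accurate and stable cannot tolerate having such a function fed to it as a perturbation.

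I would first record two elementary facts. Applying the accuracy hypothesis to $f\equiv 0$ forces $\phi_n(0)=0$. Also, since $X\subset\operatorname{int}(E)$ and $E$ is compact, $E$ is contained in some Bernstein ellipse $E_R$ (foci $\pm1$, sum of semi-axes $R$) with $R=R(E)>1$, so the classical Chebyshev growth estimate gives $\|q\|_E\le\|q\|_{E_R}\le R^{\deg q}$ for every polynomial $q$ with $\|q\|_\infty\le 1$ on $X$. The crux is then the claim: \emph{there is a constant $c>0$ such that for all $m\le n$ with $m\gtrsim\sqrt{n}$ there is a polynomial $q$ of degree $\le m$ with $\|q\|_\infty=1$ on $X$ and $\|q\|_{n,\infty}\le e^{-c\,m^2/n}$.} Granting this, fix $\beta\in(0,1)$ small enough that $R^{\beta}<\sigma$, set $m_n:=\lfloor\beta n^{\tau}\rfloor$ (so $m_n\le n$, and $m_n^2/n\asymp\beta^2 n^{2\tau-1}\to\infty$ since $\tau>1/2$), and take $q_n$ to be the polynomial of the claim for $m=m_n$; thus $\|q_n\|_\infty=1$ on $X$, $\|q_n\|_E\le R^{\beta n^{\tau}}$, and $\|q_n\|_{n,\infty}\le e^{-c'n^{2\tau-1}}$ with $c'\asymp\beta^2>0$. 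For every $t>0$ the accuracy hypothesis applied to $tq_n\in B(E)$ gives $\|tq_n-\phi_n(tq_n)\|_\infty\le tM\sigma^{-n^{\tau}}\|q_n\|_E\le tM(R^{\beta}/\sigma)^{n^{\tau}}\le t/2$ once $n$ is large, whence $\|\phi_n(tq_n)\|_\infty\ge t\|q_n\|_\infty-t/2=t/2$. Since this holds for all $t>0$ while $\|tq_n\|_{n,\infty}=t\|q_n\|_{n,\infty}$ can be made arbitrarily small, the definition (\ref{cond_inf}) of $\kappa_\infty$ taken at $f=0$ (using $\phi_n(0)=0$) yields
$$\kappa_\infty(\phi_n)\ \ge\ \frac{1}{2\,\|q_n\|_{n,\infty}}\ \ge\ \tfrac12\,e^{\,c'n^{2\tau-1}}\ \ge\ C_1^{\,n^{2\tau-1}}$$
with $C_1:=e^{c'/2}>1$, for all sufficiently large $n$, which is the assertion.

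The main obstacle is proving the claim, i.e.\ the \emph{existence} of a degree-$m$ polynomial of size $O(1)$ on $[-1,1]$ that is $e^{-c\,m^2/n}$-small at the $n$ equispaced points. This is exactly the sharp lower bound in the Coppersmith--Rivlin analysis of the growth of polynomials bounded at equally spaced points (the same ingredient behind Theorem~\ref{impossibility}), and I would invoke it directly. A self-contained proof is possible but is where the real work lies: one builds an extremizer from a Chebyshev polynomial of a short subinterval abutting an endpoint of $[-1,1]$, or analyses it via a weighted equilibrium problem pitting the arcsine measure of $[-1,1]$ against the counting measure of the nodes; the exponent $m^2/n$ emerges precisely from the mismatch between the nodal spacing $\asymp 1/n$ and the endpoint resolution $\asymp 1/m^2$. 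Everything else — the identity $\phi_n(0)=0$, the Bernstein bound $\|q_n\|_E\le R^{m_n}$, the choice of $\beta$ making $R^{\beta}<\sigma$, and the substitution into (\ref{cond_inf}) — is routine; note in particular that, in line with the generality of the statement, no linearity of $\phi_n$ is used beyond $\phi_n(0)=0$ together with the pointwise error bound.
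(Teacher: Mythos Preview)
Your proposal is correct and follows essentially the same route that the paper uses (in its proof of the more general Theorem~\ref{impossibility_3}, which specialises to the present statement via Lemma~\ref{polynomials}): observe $\phi_n(0)=0$, control polynomials on $E$ by Bernstein's bound $\|p_d\|_{E_\rho}\le\rho^d\|p_d\|_\infty$, feed degree-$\lfloor\beta n^\tau\rfloor$ polynomials into the accuracy hypothesis to obtain $\|\phi_n(p)\|_\infty\ge\tfrac12\|p\|_\infty$, and then invoke the Coppersmith--Rivlin lower bound on $B(n,m)$ at $f=0$ in the definition of $\kappa_\infty$. The only cosmetic difference is that you phrase Coppersmith--Rivlin as the existence of a single polynomial with $\|q\|_\infty=1$ and $\|q\|_{n,\infty}\le e^{-cm^2/n}$, whereas the paper carries the supremum $B(n,m)$ throughout and plugs in Lemma~\ref{polynomials} at the end; the two formulations are equivalent.
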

The theorem states that when approximating analytic functions, no approximation method using deterministic equispaced samples can maintain both stability and achieve error convergence beyond the root-exponential rate. Additionally, it implies that achieving exponential convergence in approximation inevitably leads to exponential instability.
Then, the work of Adcock, Platte, and Shadrin \cite{adcock2019optimal} extends this result to the deterministic equidistributed sampling points $\{x_i\}_{i=1}^{n}$ on $X=[-1,1]$, with respect to the {\em modified Jacobi} weight functions given by: 
$$
\mu(x) = g(x)(1-x)^\alpha(1+x)^\beta,\quad \text{with} \;\; \int_{-1}^{1}\mu(x){\rm d}x = 1, \;\; x \in X,
$$
where $\alpha, \beta > -1$ and $g(x) \in L^\infty(X)$ satisfies the condition $c_1 \leq g(x) \leq c_2$ almost everywhere. And these sampling points are implicitly defined by the equation:
\begin{equation*}
	\label{equidistributed}
	\frac{i-1}{n-1} = \int_{-1}^{x_i}\mu(x){\rm d}x, \quad i = 1,\dotsc,n.
\end{equation*}

\begin{theorem}[{\cite[Theorem 4.2]{adcock2019optimal}}]
	\label{impossibility_4}
	Consider a set of deterministic samples $\{x_i\}_{i=1}^{n} \subset X:=[-1,1]$, where $x_i$, $i=1,\dotsc,n$, are deterministically equidistributed with respect to a modified Jacobi weight function with parameters $\alpha,\beta>-1$ and $c_1\leq g(x)\leq c_2$ almost everywhere. Let $E \subseteq \mathbb{C}$ be a compact set containing $X$ in its interior. Suppose that $\left\{\phi_n\right\}_{n=1}^{\infty}$ is an approximation procedure based on $\{x_i\}_{i=1}^{n}$ such that, for some $M<\infty$, $\sigma>1$ and $\tau >0$,
	\begin{equation*}
		\left\|f-\phi_n(f)\right\|_{\infty} \leq M \sigma^{-n^\tau}\|f\|_E, \quad 1 \leq n < \infty,
	\end{equation*}
	for all $f \in B(E)$. If 
	$
	\gamma = \max\{\alpha, \beta\} > -1/2,
	$ 
	and 
	$$
	\tau > \frac{1}{2(1+\gamma)},
	$$
	then the condition numbers for $\phi_n$ satisfy
	\begin{equation*}
        \kappa_{\infty}\left(\phi_n\right) \geq C^{n^{r}}, \quad r = \frac{2(1+\gamma)\tau-1}{1+2\gamma},
	\end{equation*}
	{for some constant $C>1$} and all large $n$.
\end{theorem}
When deterministic uniform sampling ($\alpha, \beta = 0, g(x) = \frac{1}{2}$) is considered, Theorem \ref{impossibility_4} aligns with Theorem \ref{impossibility}. 
These findings are intriguing as they highlight a crucial observation: for any approximation method $\phi_n$ under specific sampling conditions, there exists an optimal convergence rate, such as $\mathcal{O}(\sigma^{-n^{\frac{1}{2(1+\gamma)}}})$ as outlined in Theorem \ref{impossibility_4}, when approximating analytic functions, provided that $\phi_n$ maintains stability. And in fact, this implies the necessary sampling rates for the stability of least squares, which will be further elucidated in the subsequent proof. The impossibility theorem is also instructive in recent work on deep learning \cite{colbrook2022difficulty} and other various approximation methods \cite{huybrechs2023aaa,veettil2022multivariate}. 

The aforementioned theorems are grounded in deterministic sampling. Nevertheless, random sampling is a common scenario as well. Consequently, we pose the following question to explore this matter:

\textbf{ Question \uppercase\expandafter{\romannumeral2}.} $\,$  {\em Does the impossibility theorem hold true under random sampling (from a probabilistic perspective)? }

\subsection{Our contribution}

The objective of this paper is to address both Question \uppercase\expandafter{\romannumeral1} and Question \uppercase\expandafter{\romannumeral2}. Assuming the sampling interval to be $X := [-1,1]$ and the sampling points $\{x_i\}_{i=1}^{n}$ to be i.i.d. according to a given probability measure $\rho_X$ on $X$, we proceed to present the key findings of this study.

\subsubsection{Answer for Question \uppercase\expandafter{\romannumeral1}}
We present the necessity of the uniform random sampling rate $n \asymp m^2$ by analyzing the stability of $P_m^n$ related to the sampling rates. In fact, we offer results under more general random sampling cases. Specifically, we focus on some clustering characteristics exhibited by random sampling near the endpoints, which are weaker than the quadratic clustering characteristics of the Chebyshev grids. This is given by the sampling probability measures $\rho_X$ with respect to the \emph{Jacobi} weight functions: 
\begin{equation}
    \label{Jacobi_e}
    w(x) = c(1-x)^\alpha(1+x)^\beta, \quad \text{with} \;\; c = \left(\int_{-1}^{1}(1-x)^\alpha(1+x)^\beta {\rm d}x\right)^{-1},\;\;  x \in X,
\end{equation}
where $\alpha, \beta > -1$, satisfying ${\rm d}\rho_X = w(x){\rm d}x$. It  covers uniform measure (${\rm d}\rho_X = \frac{1}{2}{\rm d}x$) and Chebyshev measure (${\rm d}\rho_X = \frac{1}{\pi\sqrt{1-x^2}}{\rm d}x$) as particular cases. 

Let $\gamma = \max\{\alpha, \beta\} > -1/2$. The essence of Theorem \ref{opt_rates} can be summarized as follows: if the sampling rate falls below $n \asymp m^{2(1+\gamma)}$, the exponential ill-conditioning is almost certain to occur, indicating that the sampling rate cannot be lower than $n \asymp m^{2(1+\gamma)}$ if we want the approximation process $P_m^n$ to remain stable with a high probability. Therefore, the necessity of choosing $n \asymp m^{2(1+\gamma)}$ is proven.

We deem it necessary to emphasize that in the case of uniform random sampling, where $\gamma=\alpha=\beta=0$, Theorem \ref{opt_rates} reveals that the sampling rate $n \asymp m^2$ is requisite for ensuring the stability of $P_m^n$. This provides an {\em answer} to Question I.

\begin{theorem}
    \label{opt_rates}
    Consider a set of random samples $\{x_i\}_{i=1}^{n}\subset X=[-1,1]$, where $x_i$, $i=1,\dotsc,n$, are i.i.d. with respect to a Jacobi weight function characterized by parameters $\alpha,\beta>-1$ (see \eqref{Jacobi_e}). Let $P_m^n$ be the discrete polynomial least squares projection as in \eqref{least_squares}. Assume that 
    \begin{equation}
        \label{sampling_re}
        n \asymp m^{1/\tau}, \quad 0 < \tau \leq 1,
    \end{equation}
    with $m < n$. If $\gamma := \max\{\alpha, \beta\} > -1/2$, and 
    $$
    \tau > \frac{1}{2(1+\gamma)},
    $$
    then for any $s \in \mathbb{R}$ satisfying $0<s<2(1+\gamma)\tau-1$ and all large $n$, the following holds with a probability of at least $1-\frac{2e^2\bar{c}}{n^s}$,
    \begin{equation*}
        \label{least_squares_mian_result}
        \kappa_{2}\left(P_m^n\right) \,\geq \,\frac{\eta}{n^{(1+\gamma)\tau}} \nu^{n^r}, \quad r = \frac{2(1+\gamma)\tau-1-s}{1+2\gamma}.
    \end{equation*}
    Here, $\bar{c} = \frac{c \cdot 2^{\min\{\alpha, \beta\}}}{1+\gamma}$
     where $c$ is the constant from \eqref{Jacobi_e}, and  $\eta>0$, $\nu>1$ are constants depending on $\alpha, \beta$ and 
      and the implicit constants in \eqref{sampling_re}.
\end{theorem}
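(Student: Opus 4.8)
The plan is to argue by the Runge/impossibility mechanism: if $\kappa_2(P_m^n)$ were not already as large as claimed, then $P_m^n$ — viewed as an approximation procedure on the random nodes $\{x_i\}$ — would reconstruct functions analytic near $X$ with an accuracy no procedure built on those nodes can have, on a high‑probability event. I would first record the two elementary inequalities that convert between conditioning and approximation. Using the a.s.\ reproducing identity $P_m^n q=q$ for $q\in\mathbb{P}_m$ (valid since $n>m$) and the equivalent form $\kappa_2(P_m^n)=\sup_{0\ne p\in\mathbb{P}_m}\|p\|/\|p\|_{n,2}$, for any compact $E\subset\mathbb{C}$ with $X$ in its interior and any $f\in B(E)$ one gets $\|f-P_m^n(f)\|\le(1+\kappa_2(P_m^n))\,E_m(f)_\infty$, where $E_m(f)_\infty:=\inf_{q\in\mathbb{P}_m}\|f-q\|_\infty\le c_E\sigma^{-m}\|f\|_E$ by Bernstein--Walsh, $\sigma=\sigma(E)>1$. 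Passing from $\|\cdot\|$ to $\|\cdot\|_\infty$ on $\mathbb{P}_m$ via the Christoffel bound $\|q\|_\infty\le\sqrt{K(m)}\,\|q\|$, with $K(m)\asymp m^{2(1+\gamma)}$ for the Jacobi weight (extending $K(m+1)=(m+1)^2$ of the excerpt), yields
\begin{equation*}
\|f-P_m^n(f)\|_\infty\ \le\ M_n\,\sigma^{-m}\|f\|_E,\qquad M_n\asymp\sqrt{K(m)}\,(1+\kappa_2(P_m^n)),
\end{equation*}
which, since $m\asymp n^\tau$, exhibits $P_m^n$ as an approximation procedure on $\{x_i\}_{i=1}^n$ with rate $M_n\sigma^{-n^\tau}$. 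Symmetrically, $\kappa_\infty(P_m^n)\le\sqrt{K(m)}\,\kappa_2(P_m^n)$, using $\|\cdot\|_{n,2}\le\|\cdot\|_{n,\infty}$ and the same Christoffel bound.

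Next I would isolate the sole probabilistic ingredient. Assuming without loss of generality $\gamma=\alpha$, so $x=1$ is the endpoint where the samples thin out, equation (\ref{Jacobi_e}) gives $\rho_X([1-\delta,1])=\int_{1-\delta}^1 w\le\bar{c}\,\delta^{1+\gamma}(1+o(1))$ with $\bar{c}=\frac{c\,2^{\min\{\alpha,\beta\}}}{1+\gamma}$. Choosing a suitable scale $\delta=\delta_n$ and applying a concentration/union bound over the $n$ samples, one obtains an event $\mathcal{G}=\mathcal{G}(C)$ — "the samples do not cluster too densely near $x=1$ at scale $\delta_n$" — with $\PP(\mathcal{G})\ge1-\frac{2e^2\bar{c}}{C}$; this is the only place the i.i.d.\ structure is used, and the free constant $C>2e^2\bar{c}$ enters exactly here.

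On $\mathcal{G}$ I would run the Platte--Trefethen--Kuijlaars / Adcock--Platte--Shadrin construction behind Theorem \ref{impossibility} and Theorem \ref{impossibility_4}, whose deterministic input is precisely a controlled near‑endpoint sparsity of the node set: produce $g_n\in B(E)$ with $\|g_n\|_\infty\ge1$, $\|g_n\|_E\le\sigma^{n^\tau}/(2M_n)$ and $\|g_n\|_{n,\infty}\le\nu^{-n^r/C^{1/(1+2\gamma)}}$, $r=\frac{2(1+\gamma)\tau-1}{1+2\gamma}$. Then $\|g_n-P_m^n(g_n)\|_\infty\le\tfrac12$, hence $\|P_m^n(g_n)\|_\infty\ge\tfrac12$, so
\begin{equation*}
\kappa_\infty(P_m^n)\ \ge\ \frac{\|P_m^n(g_n)\|_\infty}{\|g_n\|_{n,\infty}}\ \ge\ \tfrac12\,\nu^{n^r/C^{1/(1+2\gamma)}},
\end{equation*}
and dividing by $\sqrt{K(m)}\asymp n^{(1+\gamma)\tau}$ gives the asserted lower bound on $\kappa_2(P_m^n)$. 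The apparent circularity ($M_n$ depends on $\kappa_2$) is resolved by a dichotomy: either $\kappa_2(P_m^n)$ already exceeds the claimed bound and we are done, or it does not, in which case $M_n\sigma^{-n^\tau}\lesssim\nu^{n^r}\sigma^{-n^\tau}\to0$ because $r\le\tau$ and $C$ is large, legitimizing the choice of $g_n$ and producing the contradiction.

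The main obstacle is this last step: transplanting the construction from exactly equispaced (or deterministically equidistributed) nodes to the random Jacobi nodes on $\mathcal{G}$, and — more delicately — extracting the sharp exponent $r=\frac{2(1+\gamma)\tau-1}{1+2\gamma}$. A single endpoint gap of size $\delta_n$ together with a rescaled Chebyshev polynomial yields only the weaker exponent $\frac{2(1+\gamma)\tau-1}{2(1+\gamma)}$; obtaining $r$ requires exploiting the full self‑similar, progressively sparser pattern with which the $\asymp n^r$ near‑endpoint samples accumulate at $x=1$, together with a potential‑theoretic (Bernstein--Walsh type) bound on how large a function analytic on $E$ and small on such a pattern can be on $X$ between the nodes. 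Matching the probability constant $C$ with the exponent constant $C^{1/(1+2\gamma)}$, and tracking the $\gamma$‑dependence of $\eta$ and $\nu$, is the remaining bookkeeping.
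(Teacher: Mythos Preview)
Your plan is workable in principle, but it takes a substantially longer route than the paper, and the detour through approximation of analytic functions plus the dichotomy is unnecessary once you notice the one simplification you are overlooking.

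The paper's proof is very short because it exploits directly the fact you mention only in passing: $P_m^n$ reproduces $\mathbb{P}_m$ exactly. This gives, almost surely,
\[
\kappa_2(P_m^n)\;=\;D(n,m)\;:=\;\sup_{0\ne p\in\mathbb{P}_m}\frac{\|p\|}{\|p\|_{n,2}},
\]
so one only needs a \emph{polynomial} witness, not an analytic $g_n\in B(E)$, and there is no approximation error to control, hence no $M_n$, no circularity, and no dichotomy. One then compares $D(n,m)$ with $B(n,m):=\sup_{0\ne p\in\mathbb{P}_m}\|p\|_\infty/\|p\|_{n,\infty}$ via the Christoffel bound you already wrote down (getting $D(n,m)\gtrsim B(n,m)/m^{1+\gamma}$), and the whole theorem reduces to lower bounding $B(n,m)$ on a high-probability event. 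Substituting $m\asymp n^\tau$ at the end gives the stated exponent $r$ and the prefactor $n^{-(1+\gamma)\tau}$.

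What you identify as the ``main obstacle'' is, in the paper, the only real step, and it is carried out precisely as you intuit at the end of your proposal. The probabilistic event $\mathcal{G}$ is a uniform one-sided control on the order statistics near the sparse endpoint: with probability at least $1-2e^2\bar c/C$ one has $x_{(j)}\ge (j/(Cn))^{1/(1+\gamma)}-1$ for all $j$ (proved by a union bound over $j$ using the cdf of each $x_{(j)}$ and Stirling). On $\mathcal{G}$ these order statistics dominate the first $K\asymp (m^{2(1+\gamma)}/(Cn))^{1/(1+2\gamma)}$ Chebyshev nodes $y_j=-\cos\frac{(2j+1)\pi}{2m}$; one then takes the modified Chebyshev polynomial
\[
p(x)=\tfrac12\,T_m(x)\prod_{j=0}^{K}\frac{x-x_{(j)}}{x-y_j}\in\mathbb{P}_m,
\]
checks $\|p\|_{n,\infty}\le 1$, and evaluates at the midpoint $x^*=-\cos(\pi/m)$ to get $\|p\|_\infty\gtrsim (K^K/K!)^{(1+2\gamma)/(1+\gamma)}\gtrsim \nu^{K}$. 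This is exactly the ``full self-similar, progressively sparser pattern'' you describe, and it delivers the sharp exponent $r=\frac{2(1+\gamma)\tau-1}{1+2\gamma}$ directly; no potential-theoretic Bernstein--Walsh argument is needed.

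In short: your proposal reaches the same destination via the impossibility-theorem machinery (indeed, the paper uses that machinery for its Theorem~\ref{impossibility_3} on general $\phi_n$), but for the specific operator $P_m^n$ the exact polynomial reproduction makes the whole analytic-approximation layer and the dichotomy superfluous. Drop that layer and go straight to $\kappa_2(P_m^n)=D(n,m)\gtrsim B(n,m)/m^{1+\gamma}$, then prove the order-statistics lemma and build the Chebyshev-type witness above.
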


By combining Theorem \ref{opt_rates} and Theorem \ref{cohen}, we further substantiate that the sampling rate $n \asymp m^{2(1+\gamma)}$ is both sufficient and necessary for the stability of $P_m^n$ with a high probability, if we disregard a factor of $\log n$.

\begin{corollary}
    \label{opt_sam_rat}
    Consider a set of random samples $\{x_i\}_{i=1}^{n}\subset X=[-1,1]$, where $x_i$, $i=1,\dotsc,n$, are i.i.d. with respect to a Jacobi weight function characterized by parameters $\alpha,\beta>-1$. Let $P_m^n$ be the discrete polynomial least squares projection as in \eqref{least_squares}. Assuming 
    $
    \gamma := \max\{\alpha, \beta\} > -1/2,
    $
    the sampling rate
    \begin{equation}
        \label{iff_rate}
        n \asymp m^{2(1+\gamma)}
    \end{equation}
    is optimal for guaranteeing stability of $P_m^n$, disregarding both constant factors and factors of $\log n$. That is, up to a factor of $\log n$, \eqref{iff_rate} is both a sufficient and necessary condition for 
    $
    \kappa_{2}\left(P_m^n\right) \lesssim 1
    $ 
    with high probability.
\end{corollary}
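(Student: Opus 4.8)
The plan is to prove the two implications separately: \emph{sufficiency} of $n\asymp m^{2(1+\gamma)}$ (up to a $\log n$ factor) via Theorem \ref{cohen}, and \emph{necessity} via Theorem \ref{opt_rates}. Both directions reduce to estimates already available, so the corollary is essentially an assembly of the two theorems; the only genuinely analytic ingredient is the size of the Christoffel-type quantity $K(m)$ for a Jacobi weight.

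For sufficiency I would take $V_{m+1}=\mathbb P_m$ and let $\{\wt P_k^{(\alpha,\beta)}\}_{k=0}^{m}$ be the orthonormal Jacobi polynomials with respect to $d\rho_X=w(x)\,dx$. Using the classical bound $\|\wt P_k^{(\alpha,\beta)}\|_\infty \asymp k^{\gamma+1/2}$, which for $\gamma=\max\{\alpha,\beta\}>-1/2$ is attained near an endpoint (see \cite{adcock2019optimal}), one gets
\begin{equation*}
K(m+1)=\sup_{x\in X}\sum_{k=0}^{m}\bigl|\wt P_k^{(\alpha,\beta)}(x)\bigr|^2 \ \le\ \sum_{k=0}^{m}\|\wt P_k^{(\alpha,\beta)}\|_\infty^2 \ \asymp\ \sum_{k=0}^{m} k^{2\gamma+1}\ \asymp\ m^{2(1+\gamma)}.
\end{equation*}
Hence, if $n$ is such that $n/\log n \ge \iota^{-1}K(m+1)$, i.e. $n\gtrsim m^{2(1+\gamma)}\log n$, the hypothesis (\ref{condition}) of Theorem \ref{cohen} is met, and therefore $\kappa_2(P_m^n)\le\sqrt 6$ with probability at least $1-2n^{-r}$. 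This is precisely the assertion that $n\asymp m^{2(1+\gamma)}$, up to the stated $\log n$ factor, forces $\kappa_2(P_m^n)\lesssim 1$ with high probability.

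For necessity, suppose $n\asymp m^{s}$ with $1\le s<2(1+\gamma)$ (since $m<n$, the range $s\ge1$ is the only relevant one). Put $\tau:=1/s$, so that $n\asymp m^{1/\tau}$ with $0<\tau\le1$ and $\tau>\tfrac{1}{2(1+\gamma)}$; consequently $r=\tfrac{2(1+\gamma)\tau-1}{1+2\gamma}>0$. Theorem \ref{opt_rates} then gives, for all large $n$ and with probability at least $1-\tfrac{2e^2\bar c}{C}$,
\begin{equation*}
\kappa_2(P_m^n)\ \ge\ \frac{\eta}{n^{(1+\gamma)\tau}}\,\nu^{\,n^{r}/C^{1/(1+2\gamma)}}.
\end{equation*}
Because $\nu>1$ and $r>0$, the right-hand side diverges as $n\to\infty$ (the stretched-exponential factor swamps the polynomial prefactor), so $\kappa_2(P_m^n)$ cannot stay bounded along such a rate; taking $C$ large makes the success probability as close to $1$ as desired. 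Thus no sampling rate $n\asymp m^{s}$ with $s<2(1+\gamma)$ keeps $P_m^n$ stable with high probability, and together with the sufficiency direction this yields the claimed optimality of $n\asymp m^{2(1+\gamma)}$ up to the $\log n$ factor.

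The main obstacle is not either implication in isolation — it is reconciling the two endpoints of the argument. Theorem \ref{opt_rates} (the technical core of the paper) rules out every fixed exponent $s<2(1+\gamma)$ but says nothing at the boundary exponent $s=2(1+\gamma)$ nor about deficiencies of only a poly-logarithmic size, while Theorem \ref{cohen} delivers sufficiency only after paying an extra $\log n$. The care needed is to verify that the exponent $2(1+\gamma)$ coming out of the Christoffel bound on $K(m)$ exactly matches the threshold exponent $1/\tau\to 2(1+\gamma)$ appearing in Theorem \ref{opt_rates}, so that the gap between the sufficient and necessary conditions is genuinely only logarithmic and hides no polynomial slack — which is why the corollary is stated "up to a factor of $\log n$."
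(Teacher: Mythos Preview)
Your proposal is correct and follows essentially the same route as the paper: sufficiency via Theorem \ref{cohen} together with the Jacobi estimate $K(m+1)\asymp m^{2(1+\gamma)}$ (which the paper establishes in Lemma \ref{norm_relation}), and necessity via the exponential blow-up in Theorem \ref{opt_rates}. The paper merely packages these two steps through the intermediaries Lemma \ref{eq} and Corollary \ref{bound_iff_cond}, but the underlying argument is identical to yours.
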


\subsubsection{Answer for Question \uppercase\expandafter{\romannumeral2}}
We present an extended impossibility theorem in the context of random sampling, which serves as a generalization of the previous work in \cite{platte2011impossibility, adcock2019optimal}. In this case, the probability measures with respect to the \emph{modified Jacobi} weight functions mentioned above are taken into consideration: 
\begin{equation}\label{eq:mjacobi}
\mu(x) = g(x)(1-x)^\alpha(1+x)^\beta, \quad \text{with} \;\; \int_{-1}^{1}\mu(x){\rm d}x = 1, \;\; x \in X,
\end{equation}
where $\alpha, \beta > -1$ and $g \in L^\infty(X)$ satisfies $c_1\leq g(x)\leq c_2$ almost everywhere.

\begin{theorem}
    \label{impossibility_3}
    Consider a set of random samples $\{x_i\}_{i=1}^{n}\subset X:=[-1,1]$, where $x_i$, $i=1,\dotsc,n$, are i.i.d. with respect to a modified Jacobi weight function characterized by parameters $\alpha,\beta>-1$, and $c_1\leq g(x)\leq c_2$ almost everywhere (see \eqref{eq:mjacobi}). 
    Let $E \subseteq \mathbb{C}$ is a compact set with $X$ contained in its interior. Assume that $\left\{\phi_n\right\}_{n=1}^{\infty}$ is an approximation procedure based on $\{x_i\}_{i=1}^{n}$ such that, for some $M < \infty$, $\sigma>1$, and $\tau > 0$,
    \begin{equation}
        \label{undefine}
        \left\|f-\phi_n(f)\right\|_{\infty} \leq M \sigma^{-n^\tau}\|f\|_E, \quad 1 \leq n < \infty,
    \end{equation}
    for all $f \in B(E)$, with a probability $P_0$. 
    If $\gamma := \max\{\alpha, \beta\} > -1/2$, and 
    $$
    \tau > \frac{1}{2(1+\gamma)},
    $$
    then for any $s \in \mathbb{R}_{+}$ satisfying $s<2(1+\gamma)\tau-1$ and for all large $n$, the following holds with a probability of at least $\left(1-\frac{2e^2\bar{c}}{n^s}\right)P_0$,
    \begin{equation}
        \label{impos_result}
        \kappa_{\infty}\left(\phi_n\right) \geq \eta \,\nu^{n^r}, \quad r = \frac{2(1+\gamma)\tau-1-s}{1+2\gamma}.
    \end{equation}
    Here, $\bar{c} = \frac{c_2 2^{\min\{\alpha, \beta\}}}{1+\gamma}$, and $\eta>0$, $\nu>1$ are constants depending on $\gamma$, while $\nu$ is additionally influenced by the parameter $\sigma$ and the size of $E$.
\end{theorem}

\begin{remark}
Theorem \ref{impossibility_3} outlines the trade-off (in probability) between stability and accuracy. Indeed, under the assumption $\gamma = \max\{\alpha, \beta\} > -1/2$, if an approximation method is stable with high probability, then the convergence rate for approximating analytic functions cannot exceed $\mathcal{O}(\sigma^{-n^{\frac{1}{2(1+\gamma)}}})$ for some $\sigma>1$.

\end{remark}

\begin{remark}
    \label{optimal_convergence}
    Here, we demonstrate that $P_m^n$
  is an approximation method capable of achieving the optimal convergence rate as indicated in Theorem \ref{impossibility_3}. Numerous studies focus on enhancing the reconstruction accuracy of $P_m^n$
  to attain nearly optimal results, i.e., comparable to the best $L^\infty(X)$
  norm approximation error defined by
    \begin{equation*}
	\label{error_inf}
	e_{m,\infty}(f) = \inf_{v \in \mathbb{P}_{m}}\|v-f\|_\infty.
    \end{equation*}
In this context, it is essential to ensure that $f \in L^\infty(X)$; otherwise, $e_{m,\infty}(f)$
  becomes infinite.
    Paper \cite{adcock2019optimal} examines the deterministic equidistributed sampling according to the modified Jacobi weight functions with the sampling rate $n \asymp m^{2(1+\gamma)}$, and establishes that $\|f-P_m^n(f)\|_\infty \leq \mathcal{O}(\sqrt{n}) \, e_{m,\infty}(f)$. 
    We proceed with random sampling regarding the Jacobi weight functions. Given $f \in L^\infty(X)$, following the argumentation in \cite{adcock2019optimal}, it can be asserted that 
    $\|f-P_m^n(f)\|_\infty \leq \mathcal{O}(\sqrt{n}) \, e_{m,\infty}(f)$ with high probability, provided $n \asymp m^{2(1+\gamma)}$ without considering a logarithmic factor of $n$. Hence, for recovering analytic functions $f \in B(E_\rho)$ where $E_\rho \subset \mathbb{C}$ denotes the Bernstein ellipse with parameter $\rho>1$, the well-established bound $e_{m,\infty}(f) \leq \frac{2}{\rho-1}\|f\|_{E_\rho}\rho^{-m}$ (see \cite[Theorem 8.2]{trefethen2019approximation}) suggests that the best possible convergence rate $\mathcal{O}(\sigma^{-n^{\frac{1}{2(1+\gamma)}}})$ can be attained for $P_m^n$ with high probability, under the setting $n \asymp m^{2(1+\gamma)}$.
\end{remark}

\subsection{Organization} 
The paper is organized as follows. Section \ref{2} introduces the notations and lemmas that will be utilized throughout the paper, as well as the primary probability measures considered in this paper. In Section \ref{3}, we conduct an analysis of the maximal behaviour of polynomials that are bounded on $n$ random sampling points. This analysis serves as a tool to address Questions \uppercase\expandafter{\romannumeral1} and \uppercase\expandafter{\romannumeral2}. Section \ref{4} presents the proof of Theorem \ref{opt_rates} and Corollary \ref{opt_sam_rat}, providing a positive response to Question \uppercase\expandafter{\romannumeral1}. Section \ref{5} offers the proof of Theorem \ref{impossibility_3} in relation to Question \uppercase\expandafter{\romannumeral2}. Lastly, in Section \ref{6}, we conduct numerical experiments to validate our theoretical findings.

\section{Preliminaries}
\label{2}
In this section, we present certain notations, lemmas, and the \emph{Jacobi} weight functions that are instrumental to our paper.

\subsection{Notations}
In this paper, we consider functions defined on compact intervals, which can be normalized without loss of generality to $X = [-1, 1]$. Unless otherwise specified, $\{x_i\}_{i=1}^{n} \subset X$ are i.i.d. random sampling points drawn from an absolutely continuous probability measure $\rho_X$ on $X$ with respect to the Lebesgue measure. We evaluate a function $f$ at these points. 
Additionally, we define $\mathbb{P}_{m}$ as the space of univariate algebraic polynomials of degree at most $m$. Throughout the paper, the notation $e$ stands for the base of the natural logarithm.

Let's recapitulate the various norms used in this paper. Firstly, we have the $L^\infty(X)$ norm denoted as $\|f\|_{\infty}:= \sup_{x \in X}{\abs{f(x)}}$. 
Correspondingly, we have the discrete supremum semi-norm, also referred to as the empirical $L^\infty(X)$ semi-norm, denoted as $\|f\|_{n,\infty}:=\max_{i \in \{1,\dotsc,n\}}{\abs{f(x_i)}}$. 
Next, we have the $L^2(X,\rho_X)$ norm, denoted as $\|f\|:= \left( \int_X {|f(x)|^2} {\rm d}\rho_X(x) \right)^{1/2}$, associated with the inner product, denoted as $\left<f,g\right> := \int_X {f(x)\overline{g(x)}} {\rm d}\rho_X(x)$. Correspondingly, there is the empirical $L^2(X,\rho_X)$ semi-norm, denoted as $\|f\|_{n,2}:= \left(\frac{1}{n} \sum_{i=1}^n{|f(x_i)|^2}\right)^{1/2}$, associated with the empirical semi-inner product, denoted as $\left<f,g\right>_n := \frac{1}{n} \sum_{i=1}^n{f(x_i)\overline{g(x_i)}}$. Furthermore, we write $\|\mathbf{u}\|_2$ for the Euclidean norm of a vector $\mathbf{u}$, and $|||\mathbf{M}|||:= \max_{\mathbf{u} \neq 0} \frac{\|\mathbf{Mu}\|_2}{\|\mathbf{u}\|_2}$ for the spectral norm of a matrix $\mathbf{M}$.

We adopt the asymptotic notation $a_n \asymp b_n$ as $n \rightarrow \infty$ if there exist positive constants $c_1$ and $c_2$ such that $0 \leq c_1 b_n \leq a_n \leq c_2 b_n$ holds for all large $n$. For brevity, we will omit the notation $n \rightarrow \infty$. At times, we use the notation $A \lesssim B$ to denote that there exists a universal constant $c>0$ such that $A \leq cB$. We say that $a_n$ converges to zero exponentially fast if $\abs{a_n}=\mathcal{O}\left(\sigma^{-n^\tau}\right)$ for some constants $\sigma>1$ and $\tau>0$. 
Specifically, when $\tau = 1/2$, we refer to it as root-exponential convergence.

\subsection{Random sampling with respect to the modified Jacobi weight functions}
\label{Jacobi}

We consider sampling based on the Jacobi weight functions, which are defined as
\begin{equation}
    \label{Jacobi_weight}
    w(x) = c(1-x)^\alpha(1+x)^\beta, \quad \text{with} \;\; c = \left(\int_{-1}^{1}(1-x)^\alpha(1+x)^\beta {\rm d}x\right)^{-1},
    \;\; x \in X=[-1,1],
\end{equation}
where $\alpha, \beta > -1$. It is well-known that choosing $\alpha = 0$ and $\alpha = -\frac{1}{2}$ correspond to the uniform and Chebyshev weight functions, respectively. The random sampling points $\{x_i\}_{i=1}^{n} \subset X$ are i.i.d with respect to the probability measure $\rho_X$ that satisfies 
${\rm d}\rho_X = w(x){\rm d}x$.

We would like to mention that the \emph{Jacobi} weight functions are specific instances of the the following \emph{modified Jacobi} weight functions
\begin{equation*}
	\label{modified_Jacobi}
	\mu(x) = g(x)(1-x)^\alpha(1+x)^\beta,
    \quad x \in X,
\end{equation*}
where $\int_{-1}^{1}\mu(x){\rm d}x = 1$, $\alpha, \beta > -1$, and $g \in L^\infty(X)$ satisfies the condition $c_1\leq g(x)\leq c_2$ almost everywhere for some constants $c_1,c_2$.


\subsection{Relation between $\kappa_{2}\left(P_m^n\right)$ and  $D(n,m)$}

To comprehend the essence of $\kappa_{2}\left(P_m^n\right)$, we introduce random variables $D(n,m)$ and $B(n,m)$ as follows:
\begin{equation}
    \label{DB}
    D(n,m) := \sup_{\substack{p \in \mathbb{P}_m \\ \|p\|_{n, 2} \neq 0}}{\frac{\|p\|}{\|p\|_{n,2}}}, \quad \quad
    B(n,m) := \sup_{\substack{p \in \mathbb{P}_m \\ \|p\|_{n, \infty} \neq 0}}{\frac{\|p\|_\infty}{\|p\|_{n,\infty}}}, \quad \text{with} \;\; 1\leq m<n.
\end{equation}
The norms $\|p\|_{n,2}$ and $\|p\|_{n,\infty}$  depend on the random sampling points $\{x_i\}_{i=1}^n$ that are i.i.d according to a absolutely continuous probability measure $\rho_X$ on $X$ with respect to the Lebesgue measure. As a result, both $D(n,m)$ and $B(n,m)$ are affected by the sampling points $\{x_i\}_{i=1}^n$. However, in order to maintain simplicity in notation, we have chosen not to explicitly represent this dependence in our symbols. Let $\Omega$ be the event where there exist at least $m+1$ of the sampling points $\{x_i\}_{i=1}^n$ that are distinct. Evidently, we have $P(\Omega)=1$. And both terms in \eqref{DB} are valid on $\Omega$ and infinite otherwise. Thus, all the following results involving the properties of $D(n,m)$ or $B(n,m)$ are demonstrated disregarding the set $\Omega^{C}$ of measure zero. 

The connection between the two of them can be readily obtained from Lemma \ref{norm_relation}, which employs the quantity $K(m)$, as defined in \eqref{quantity}, to regulate the $L^{\infty}(X)$ norm using the $L^2(X, \rho_X)$ norm. We first introduce a classical result concerning \emph{Jacobi}  polynomials, which will facilitate our proof.
\begin{theorem}[{\cite[\S 2.3, Lemma 3.4, Theorem 3.11]{osilenker1999fourier}}]
    \label{Jacobi_poly}
    Let $X=[-1,1]$ and $\rho_X$ be a probability measure on $X$ such that ${\rm d}\rho_X = w(x){\rm d}x$, where $w(x)$ denotes a Jacobi weight function characterized by parameters $\alpha,\beta>-1$ as in \eqref{Jacobi_weight}. 
    Consider the \emph{Jacobi} polynomials with the parameters $\alpha,\beta$
    \begin{equation}
        \label{jacoPolyDef}
        J_k(\alpha,\beta;x):= \frac{(-1)^k}{k!2^k}(1-x)^{-\alpha}(1+x)^{-\beta}\frac{{\rm d}^k}{{\rm d}x^k}[(1-x)^{\alpha+k}(1+x)^{\beta+k}], \quad k \in \mathbb{Z}_{\geq 0},\; x \in X.
    \end{equation}
    They are orthogonal in the sense of $L^2(X,\rho_X)$.
    And if $\gamma := \max\{\alpha,\beta\} > -1/2$, then there exists a constant $C$ depending only on $\alpha, \beta$ such that 
    $$
    \|J_k\|_\infty \leq C\,k^{\gamma+1/2}\,\|J_k\|, \quad \text{for all} \;\; k \in \mathbb{Z}_{\geq 0}.
    $$
\end{theorem}

\begin{lemma}
	\label{norm_relation}
    Let $X$ be a domain of $\mathbb{R}^d$ and $\rho_X$ be a probability measure on $X$. 
	Assume $V_m$ is an $m$-dimensional linear subspace of $L^{\infty}(X) \cap L^2(X,\rho_X)$, equipped with an orthonormal basis
	$\{L_j\}_{j=1}^{m}$ in the sense of $L^2(X,\rho_X)$. Then, 
    \begin{equation}
        \label{ie_1}
        \|v\|_{\infty} \leq \sqrt{K(m)} \cdot \|v\|, \quad \text{ for all} \;\; v \in V_m,
    \end{equation}
	where $K(m):=\sup_{x \in X} \sum_{j=1}^{m}{|L_j(x)|^2}$, as outlined in \eqref{quantity}. In particular, when considering $X=[-1,1]$, $V_{m+1} = \mathbb{P}_m$ and $\rho_X$ such that ${\rm d}\rho_X = w(x){\rm d}x$ where $w(x)$ denotes a Jacobi weight function characterized by parameters $\alpha,\beta>-1$ as in \eqref{Jacobi_weight}, if $\gamma := \max\{\alpha,\beta\} > -1/2$, then for each $m \geq 1$,
    \begin{equation}
        \label{ie_2}
        \|p\|_{\infty} \leq  C\,m^{1+\gamma}\, \|p\|, \quad \text{for all} \;\; p \in \mathbb{P}_m,
    \end{equation}
    where the constant $C$ depends only on $\alpha,\beta$.
\end{lemma}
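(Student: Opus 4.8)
The first inequality \eqref{ie_1} is the standard reproducing-kernel-type bound. The plan is to fix $v \in V_m$ and expand $v = \sum_{j=1}^m c_j L_j$ in the orthonormal basis, so that $\|v\|^2 = \sum_j |c_j|^2$ by Parseval. For a fixed $x \in X$, Cauchy--Schwarz gives
\[
|v(x)|^2 = \Bigl| \sum_{j=1}^m c_j L_j(x) \Bigr|^2 \leq \Bigl( \sum_{j=1}^m |c_j|^2 \Bigr) \Bigl( \sum_{j=1}^m |L_j(x)|^2 \Bigr) \leq \|v\|^2 \cdot K(m).
\]
Taking the supremum over $x \in X$ yields $\|v\|_\infty^2 \leq K(m) \|v\|^2$, which is \eqref{ie_1}. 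This step is routine and presents no obstacle.

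For the second part \eqref{ie_2}, I would specialize to $V_{m+1} = \mathbb{P}_m$ with $\{L_j\}_{j=1}^{m+1}$ the orthonormalized Jacobi polynomials associated with the weight $w(x) = c(1-x)^\alpha(1+x)^\beta$ (normalized to a probability density). By \eqref{ie_1} it suffices to show $K(m+1) = \sup_{x \in [-1,1]} \sum_{j=1}^{m+1} |L_j(x)|^2 \lesssim m^{2(1+\gamma)}$, since then $\|p\|_\infty \leq \sqrt{K(m+1)}\,\|p\| \lesssim m^{1+\gamma}\|p\|$ for all $p \in \mathbb{P}_m$. The key fact is that for Jacobi weights the Christoffel-type sum $\sum_{j=1}^{m+1}|L_j(x)|^2$ attains its maximum over $[-1,1]$ at the endpoints $x = \pm 1$ (this is classical; it follows from known pointwise bounds on Jacobi polynomials, which grow like $j^{\alpha+1/2}$ near $x=1$ and $j^{\beta+1/2}$ near $x=-1$, and are uniformly smaller in the bulk). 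Evaluating at $x=1$: the normalized Jacobi polynomial of degree $k$ satisfies $|L_{k+1}(1)|^2 \asymp k^{2\alpha+1}$ (up to constants depending on $\alpha,\beta$), hence $\sum_{j=1}^{m+1}|L_j(1)|^2 \asymp \sum_{k=0}^{m} k^{2\alpha+1} \asymp m^{2\alpha+2} = m^{2(1+\alpha)}$; symmetrically at $x=-1$ one gets $\asymp m^{2(1+\beta)}$. Taking the larger of the two exponents gives $K(m+1) \asymp m^{2(1+\gamma)}$ with $\gamma = \max\{\alpha,\beta\}$, and \eqref{ie_2} follows. As a sanity check, in the Legendre case $\alpha=\beta=0$ this reproduces the exact computation $K(m+1) = (m+1)^2$ quoted in the introduction.

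The main obstacle is justifying the two claims about Jacobi polynomials: (i) that the maximum of $\sum_{j=1}^{m+1}|L_j(x)|^2$ over $[-1,1]$ is comparable to its value at the endpoints, and (ii) the endpoint growth rate $|L_{k+1}(\pm 1)| \asymp k^{\alpha+1/2}$ (resp. $k^{\beta+1/2}$). Both are standard consequences of the asymptotics of Jacobi polynomials — e.g. Szeg\H{o}'s pointwise bounds, which state $|L_k(x)| \lesssim k^{1/2}\,(\text{something bounded in the bulk})$ and $|L_k(x)| \lesssim k^{\gamma+1/2}$ uniformly — so I would cite a reference such as Szeg\H{o}'s monograph or the treatment in \cite{adcock2019optimal} rather than reprove them. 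Once these pointwise estimates are in hand, the summation is elementary. I do not expect any genuine difficulty beyond locating and correctly invoking these classical bounds; the rest is Cauchy--Schwarz and a geometric-type sum.
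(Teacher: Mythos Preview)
Your proposal is correct and follows essentially the same route as the paper: Cauchy--Schwarz for \eqref{ie_1}, then bounding $K(m+1)$ via $\|L_j\|_\infty \asymp j^{\gamma+1/2}$ from Szeg\H{o}'s endpoint asymptotics for Jacobi polynomials (the paper cites \cite[Theorem 7.32.1]{szeg1939orthogonal} together with the explicit values $P_j(\pm 1)$ and Stirling to get exactly this). One small remark: both your sketch and the paper's proof tacitly rely on $\gamma \geq -1/2$ for the endpoint-maximum behavior, which matches how the lemma is actually used downstream.
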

\begin{proof}
	For any $v=\sum_{j=1}^{m}{w_j L_j} \in V_m$ with $\bm{w}:=\left(w_1,\ldots,w_m\right)^\top$, 
	$$
	\|v\|_{\infty} = \sup_{x \in X} \left| \sum_{j=1}^{m}{w_j L_j(x)} \right| 
	\leq \left(\sup_{x \in X} \sum_{j=1}^{m}{|L_j(x)|^2}\right)^{\frac{1}{2}} \cdot \|\bm{w}\|_2
	= \sqrt{K(m)} \cdot \|v\|.
	$$
Here, we apply the Cauchy–Schwarz inequality.

To prove \eqref{ie_2} by utilizing \eqref{ie_1}, it remains to estimate $K(m+1)$. To this end, we take $L_k = J_{k-1}/\|J_{k-1}\|$ for $k=1,\dotsc,m+1$, where $J_{k-1}$ is defined as in \eqref{jacoPolyDef}, to be the $L^2(X,\rho_X)$-normalized \emph{Jacobi} polynomials with the parameters $\alpha,\beta$. Note that $L_k$ is of degree $k-1$. Hence, by Theorem \ref{Jacobi_poly}, $\{L_k\}_{k=1}^{m+1}$ forms an orthonormal basis of $\mathbb{P}_m$ in the sense of $L^2(X,\rho_X)$. Since $\gamma = \max\{\alpha,\beta\} > -1/2$, Theorem \ref{Jacobi_poly} further implies that there exists a constant $C_1$ depending only on $\alpha, \beta$ such that 
$$
\|L_k\|_\infty \leq C_1\,k^{\gamma+1/2}, \quad 1 \leq k \leq m+1.
$$
Consequently, we have
\begin{equation}
    \label{below_use}
    K(m+1)=\sup_{x \in X} \sum_{k=1}^{m+1}{|L_k(x)|^2} \leq C_1\sum_{k=1}^{m+1}{k^{1+2\gamma}} \leq C_1(m+2)^{2(1+\gamma)} \leq  C\,m^{2(1+\gamma)},
\end{equation}
where $C$ is a constant depending only on $\alpha, \beta$. This in conjugation with \eqref{ie_1} completes the proof of \eqref{ie_2}.
\end{proof}

\begin{lemma}
    \label{re_norm}
    Given random samples $\{x_i\}_{i=1}^{n}\subset X=[-1,1]$ i.i.d. with respect to a Jacobi weight function characterized by parameters $\alpha,\beta>-1$, assume that $\gamma := \max\{\alpha,\beta\} > -1/2$, we have
    $$
    \frac{B(n,m)}{C\,m^{1+\gamma}}
    \leq D(n,m)
    \leq \sqrt{n}B(n,m) \quad \text{almost surely}, 
    $$
    for $1 \leq m < n$ and some constant $C$ depending only on $\alpha,\beta$. Here, the definitions of $B(n,m)$ and $D(n,m)$ are provided in \eqref{DB}.
\end{lemma}
\begin{proof}
    We establish the inequalities within the domain $\Omega$
    ensuring that there are at least $m+1$ distinct sampling points among $\{x_i\}_{i=1}^n$. Lemma \ref{norm_relation} states that for each $m \geq 1$, $\|p\|_\infty \leq  C\,m^{1+\gamma}\|p\|$ for all $p \in \mathbb{P}_m$ and some constant $C$ depending only on $\alpha,\beta$. It combined with the fact that $\|p\|_{n, \infty} \geq \|p\|_{n,2}$ provides that
    $$
    \frac{B(n,m)}{C\,m^{1+\gamma}}
    = \sup_{\substack{p \in \mathbb{P}_m \\ \|p\|_{n, \infty} \neq 0}}{\frac{\|p\|_\infty/(C\,m^{1+\gamma})}{\|p\|_{n,\infty}}}
    \leq \sup_{\substack{p \in \mathbb{P}_m \\ \|p\|_{n, 2} \neq 0}}{\frac{\|p\|}{\|p\|_{n,2}}} = D(n,m), 
    $$
    for $1 \leq m < n$. Additionally, the norm inequalities $\|p\| \leq \|p\|_\infty$ and $\sqrt{n}\|p\|_{n,2} \geq \|p\|_{n,\infty}$ yield that
    $$
    D(n,m) = \sup_{\substack{p \in \mathbb{P}_m \\ \|p\|_{n, 2} \neq 0}}{\frac{\|p\|}{\|p\|_{n,2}}}
    \leq \sup_{\substack{p \in \mathbb{P}_m \\ \|p\|_{n, \infty} \neq 0}}{\frac{\|p\|_\infty}{\|p\|_{n,\infty}/\sqrt{n}}}
    = \sqrt{n}B(n,m).
    $$
\end{proof}

The following lemma claims that $\kappa_{2}\left(P_m^n\right)$ and $D(n,m)$ are actually equal almost surely. This also indicates that $\kappa_{2}\left(P_m^n\right)$ is irrelevant to the function space where the function $h$, in the definition \eqref{cond_2} of $\kappa_{2}\left(P_m^n\right)$, is located.

\begin{lemma}
    \label{eq}
    Given random samples $\{x_i\}_{i=1}^{n}\subset X=[-1,1]$ i.i.d. with respect to a absolutely continuous probability measure $\rho_X$ on $X$ with respect to the Lebesgue measure,
    we have
    $$
    \kappa_{2}\left(P_m^n\right) = D(n,m) = \left(\lambda_{\min}(\mathbf{G})\right)^{-1/2} \quad \text{almost surely},
    $$  
    for $1 \leq m < n$. Here, $\kappa_{2}\left(P_m^n\right)$ and $D(n,m)$ are defined in \eqref{cond_2} and \eqref{DB} respectively, and $\mathbf{G} :=(\left<L_j, L_k\right>_n)_{j,k=1,\dotsc,m+1}$ is a positive semidefinite matrix as in \eqref{system} with $\{L_j\}_{j=1}^{m+1}$ an orthonormal basis of $\mathbb{P}_m$ in the sense of $L^2(X,\rho_X)$.
\end{lemma}
\begin{proof}
     We still assume that $\Omega$ holds, ensuring that there are at least $m+1$ distinct sampling points among $\{x_i\}_{i=1}^n$.
     Assuming that $\{L_j\}_{j=1}^{m+1}$ is an orthonormal basis of $\mathbb{P}_{m}$ in the sense of $L^2(X,\rho_X)$, any $p \in \mathbb{P}_{m}$ can be represented as $p = \sum_{j=1}^{m+1}{w_jL_j}$. It follows that $\|p\|^2=\|\mathbf{w}\|_2^2$ with $\mathbf{w} = (w_1,\dotsc,w_m)^\top$ and that $\|p\|_{n,2}^2=\mathbf{w}^\top\mathbf{G}\mathbf{w}$. It is apparent that $\|p\|_{n, 2} = 0$ is equivalent to $p = 0$. Therefore, $\mathbf{G}$ is nonsingular.
    
    Since the operator $P_m^n$ is a projection onto $\mathbb{P}_m$, we have 
    \begin{equation}
        \label{ge}
        \kappa_{2}\left(P_m^n\right) 
        = \sup _{\substack{h \\ \|h\|_{n, 2} \neq 0}} \frac{\left\|P_m^n(h)\right\|}{\|h\|_{n, 2}}
        \geq \sup_{\substack{p \in \mathbb{P}_m \\ \|p\|_{n, 2} \neq 0}}{\frac{\|P_m^n(p)\|}{\|p\|_{n,2}}}
        =\sup_{\substack{p \in \mathbb{P}_m \\ \|p\|_{n, 2} \neq 0}}{\frac{\|p\|}{\|p\|_{n,2}}}
        = D(n,m).
    \end{equation}
    On the other hand, it can be observed that the supremum of $\frac{\left\|P_m^n(h)\right\|}{\|h\|_{n, 2}}$ where $\|h\|_{n, 2} \neq 0$ cannot be reached at $h$ such that $\|P_m^n(h)\|=0$, i.e., $\|P_m^n(h)\|_{n, 2}=0$. Consequently, 
    $$
    \begin{aligned}
    \kappa_{2}\left(P_m^n\right)
    & = \sup _{\substack{h \\ \|h\|_{n, 2} \neq 0 \\ \|P_m^n(h)\|_{n, 2} \neq 0}} \frac{\left\|P_m^n(h)\right\|}{\|h\|_{n, 2}}
    = \sup _{\substack{h \\ \|h\|_{n, 2} \neq 0 \\ \|P_m^n(h)\|_{n, 2} \neq 0}} \left(\frac{\left\|P_m^n(h)\right\|}{\|h\|_{n, 2}} \frac{\|P_m^n(h)\|_{n, 2}}{\|P_m^n(h)\|_{n, 2}}\right) \\
    & \leq D(n,m)\sup _{\substack{h \\ \|h\|_{n, 2} \neq 0 \\ \|P_m^n(h)\|_{n, 2} \neq 0}} \frac{\left\|P_m^n(h)\right\|_{n, 2}}{\|h\|_{n, 2}} 
    \leq D(n,m),
    \end{aligned}
    $$
    where the final inequality follows  from the fact that $P_m^n$ is the orthogonal projection with respect to the empirical semi-inner product $\left<\cdot,\cdot\right>_n$. Combined with \eqref{ge}, $\kappa_{2}\left(P_m^n\right) \overset{a.s.}{=} D(n,m)$ is obtained.
    
   Furthermore, 
   \begin{equation*}
   \begin{aligned}
        D(n,m) & = \sup_{\substack{p \in \mathbb{P}_m \\ 
        \|p\|_{n, 2} \neq 0}}{\frac{\|p\|}{\|p\|_{n,2}}}
        \overset{(a)}{=} \sup_{\substack{p \in \mathbb{P}_m \\ p \neq 0}}{\frac{\|p\|}{\|p\|_{n,2}}} \\
        & = \left(\sup_{\substack{\mathbf{w} \in \mathbb{R}^{m} \\ \mathbf{w} \neq 0}}{\frac{\mathbf{w}^\top\mathbf{w}}{\mathbf{w}^\top\mathbf{G}\mathbf{w}}}\right)^{1/2}
        = \left(\sup_{\substack{\mathbf{w} \in \mathbb{R}^{m} \\ \|\mathbf{w}\|_2 = 1}}{\frac{1}{\mathbf{w}^\top\mathbf{G}\mathbf{w}}}\right)^{1/2} \\
        & \overset{(b)}{=} \left(\max_{\substack{\mathbf{w} \in \mathbb{R}^{m} \\ \|\mathbf{w}\|_2 = 1}}{\frac{1}{\mathbf{w}^\top\mathbf{G}\mathbf{w}}}\right)^{1/2}
        = \left(\min_{\substack{\mathbf{w} \in \mathbb{R}^{m} \\ \|\mathbf{w}\|_2 = 1}}{\mathbf{w}^\top\mathbf{G}\mathbf{w}}\right)^{-1/2}
        = \left(\lambda_{\min}(\mathbf{G})\right)^{-1/2}, 
    \end{aligned}
    \end{equation*}
    where the reason for ($a$) is that $\|p\|_{n, 2} \neq 0$ is equivalent to $p \neq 0$ on $\Omega$, and ($b$) is derived from the fact that a continuous function on a compact set attains its maximum value. Note that $P(\Omega)=1$. This completes the proof.
\end{proof}

\subsection{Maximal behavior of polynomials bounded on deterministic samples}

Here, our focus lies on $B(n,m)$, as defined in \eqref{DB}, which signifies the highest attainable value of polynomials within the range $[-1,1]$ with specific constraints. Since it is a fundamental quantity, previous research has already reported explicit results for deterministic samples.
 The proof of Theorem \ref{impossibility} incorporates the following lemma from Coppersmith and Rivlin \cite{coppersmith1992growth} regarding equispaced points in $[-1,1]$.

\begin{lemma}[\cite{coppersmith1992growth}]
    \label{polynomials}
  Given samples $\{x_i\}_{i=1}^{n}$ as the equispaced grids of $n$ points on $X=[-1,1]$, there exist universal constants $\nu_1,\nu_2 >1$  as well as $n_0 \geq 1$, such that
    $$
    \nu_1^{\frac{m^2}{n}} \leq B(n,m) \leq \nu_2^{\frac{m^2}{n}}
    $$
holds for all $n \geq n_0$ and $1 \leq m < n$.
\end{lemma}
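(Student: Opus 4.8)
The statement to prove is Lemma~\ref{polynomials}: for equispaced grids on $[-1,1]$, the quantity $B(n,m)$ satisfies $a^{m^2/n} \leq B(n,m) \leq b^{m^2/n}$ for universal constants $a,b>1$. Since this is quoted directly from Coppersmith and Rivlin \cite{coppersmith1992growth}, the plan is to recall the shape of their argument rather than reinvent it.

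For the \emph{lower bound}, the plan is to exhibit a single polynomial $p \in \mathbb{P}_m$ that is small at all $n$ equispaced nodes but large somewhere in $[-1,1]$ (e.g., near the endpoints, which is where equispaced interpolation misbehaves). A natural candidate is a suitably scaled and shifted Chebyshev-type polynomial, or a product of linear factors vanishing near the nodes; one estimates $\|p\|_\infty / \|p\|_{n,\infty}$ from below. The classical way to get the $a^{m^2/n}$ growth is to note that a degree-$m$ polynomial bounded by $1$ at $n$ equispaced points can still grow like the Chebyshev polynomial on a rescaled interval: the gap between consecutive nodes is $\asymp 1/n$, so a polynomial that oscillates with magnitude $1$ on a sub-grid of spacing $\asymp 1/n$ but of ``effective degree'' $m$ behaves, after rescaling to a unit interval, like $T_m$ evaluated at a point outside $[-1,1]$ at distance $\asymp m^2/n$ scaled appropriately, giving exponential growth with exponent $\asymp m^2/n$. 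One makes this precise by an explicit construction and a direct estimate using $T_m(1+\delta) \asymp e^{m\sqrt{2\delta}}$ type bounds, with $\delta \asymp (m/n)^2$.

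For the \emph{upper bound}, the plan is the dual estimate: any $p \in \mathbb{P}_m$ with $\|p\|_{n,\infty} \leq 1$ must satisfy $\|p\|_\infty \leq b^{m^2/n}$. This follows from a Lagrange-interpolation / Markov-brothers-inequality argument: write $p$ via its values at the equispaced nodes using the Lagrange basis, bound the Lebesgue function of the equispaced grid — but the Lebesgue constant itself grows like $2^n$, which is far too large, so instead one partitions $[-1,1]$ into a region near the ``bulk'' where a local interpolation estimate gives polynomial growth, and endpoint regions where one uses a Remez-type or Chebyshev-comparison inequality: a degree-$m$ polynomial bounded by $1$ on a set of $\asymp n$ points spanning $[-1,1]$ cannot exceed $b^{m^2/n}$ on all of $[-1,1]$, which one proves by covering $[-1,1]$ with intervals containing enough nodes to control $p$ locally via its restriction and then chaining the local bounds, or by directly invoking the known Chebyshev extremal property on a rescaled interval matching the node spacing.

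\textbf{Main obstacle.} The delicate point is the \emph{upper bound}, specifically controlling the growth of $p$ in the endpoint zones of $[-1,1]$ where equispaced interpolation is worst; the naive Lebesgue-constant bound $2^n$ is exponentially too weak, so one needs the sharper Coppersmith--Rivlin estimate that the true growth rate of the worst polynomial is governed by the ratio $m^2/n$ and not by $n$ alone. Matching the lower and upper exponents up to the multiplicative constants $a,b$ (rather than, say, losing a $\log$ factor) requires the careful extremal analysis of \cite{coppersmith1992growth}, and reproducing it in full would mean grinding through their construction; here the plan is simply to cite it and record the statement in the form needed for the subsequent random-sampling arguments.
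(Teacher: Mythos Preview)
Your proposal is appropriate: the paper does not prove this lemma at all but simply quotes it from Coppersmith and Rivlin \cite{coppersmith1992growth}, so your plan to cite it and sketch the shape of their argument already goes beyond what the paper itself provides. There is nothing to compare against; the paper treats Lemma~\ref{polynomials} purely as a black-box input to the random-sampling analysis.
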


Subsequently, in Lemma \ref{polynomials——deter}, the paper \cite{adcock2019optimal} extends the findings of Lemma \ref{polynomials} to encompass deterministic equidistributed sampling with regard to the \emph{modified Jacobi} weight functions.
 This extension ultimately leads to the derivation of Theorem \ref{impossibility_4}.

\begin{lemma}[\cite{adcock2019optimal}]
	\label{polynomials——deter}
	Given samples $\{x_i\}_{i=1}^{n} \subset X=[-1,1]$, where $x_i$, $i=1,\dotsc,n$, are deterministically equidistributed with respect to a modified Jacobi weight function with parameters $\alpha,\beta>-1$ and $c_1\leq g(x)\leq c_2$ almost everywhere, if $\gamma := \max\{\alpha, \beta\} > -1/2$, then there exist constants $\eta>0$, $\nu>1$ depending on $\alpha$ and $\beta$, such that
	$$
	B(n,m) \geq \eta \,\nu^{\lambda}, \quad \lambda = \left(\frac{m^{2(1+\gamma)}}{n}\right)^{\frac{1}{1+2\gamma}},
	$$
	for all $1 \leq m < n$.
\end{lemma}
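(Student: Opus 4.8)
The plan is to reduce the statement to the equispaced case already recorded in Lemma~\ref{polynomials}, by localising near the endpoint at which the equidistributed points cluster most strongly. By the symmetry $x\mapsto -x$, which fixes $[-1,1]$, interchanges $\alpha\leftrightarrow\beta$, and leaves $B(n,m)$ unchanged, we may assume $\gamma=\alpha\ge\beta$, so that clustering is strongest at $x=1$. Write $F(x)=\int_{-1}^{x}\mu(t)\,dt$, so that $x_i=F^{-1}\!\big(\tfrac{i-1}{n-1}\big)$. Since $c_1\le g\le c_2$, the tail of $F$ is comparable to that of the pure Jacobi weight: $1-F(x)=\int_x^1\mu\asymp\int_x^1(1-t)^{\alpha}(1+t)^{\beta}\,dt\asymp(1-x)^{1+\gamma}$ as $x\to1$. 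Hence the sample points nearest $1$ obey $1-x_{\,n-k}\asymp (k/n)^{1/(1+\gamma)}$ for $1\le k\lesssim n$; in particular the rightmost gap has length $\asymp n^{-1/(1+\gamma)}$, and for every $T\in(0,1)$ the window $[1-T,1]$ carries $\asymp nT^{1+\gamma}$ of the points $\{x_i\}$ (only $\mathcal{O}(1)$ of them once $T\lesssim n^{-1/(1+\gamma)}$).

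Next I would fix the critical scale $T^{*}:=(m/n)^{2/(1+2\gamma)}$, which lies in $(0,1)$ because $m<n$ and $\gamma>-1/2$, and satisfies $T^{*}\gtrsim n^{-1/(1+\gamma)}$ exactly when $\lambda\ge1$ --- the only case with anything to prove, since $B(n,m)\ge1$ always. On $W:=[1-T^{*},1]$ the factors $(1-x)^{\gamma}$, $(1+x)^{\beta}$ and $g(x)$ all vary by bounded ratios, so $W$ carries $N\asymp n(T^{*})^{1+\gamma}\asymp\big(m^{2(1+\gamma)}/n\big)^{1/(1+2\gamma)}=\lambda$ sample points, equally spaced up to a bounded factor; moreover $\lambda\le m$ since $m<n$. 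Geometrically, $W$ is precisely the zone in which the sample spacing $\asymp (1-x)^{-\gamma}/n$ exceeds the Chebyshev spacing $\asymp\sqrt{1-x}/m$ appropriate to degree $m$, and the extremal growth of a degree‑$m$ polynomial across a hole of length $T^{*}$ at the endpoint of $[-1,1]$ is of order $e^{\,m\sqrt{T^{*}}}$ (already attained at $x=1$ by the Chebyshev polynomial of $[-1,1-T^{*}]$). Since $m\sqrt{T^{*}}=\lambda$, the aim is to exhibit $p\in\mathbb P_m$ with $\|p\|_{\infty}\gtrsim a^{\lambda}$ for some $a>1$ and $\|p\|_{n,\infty}\lesssim1$, which yields $B(n,m)\gtrsim a^{\lambda}$, i.e. $B(n,m)\ge\eta_1\nu_1^{\lambda}$ with $\nu_1:=a>1$ and $\eta_1>0$ the implied constant.

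The construction of such a $p$ is the heart of the matter, and the step I expect to be the main obstacle. If the sample points outside $W$ could be disregarded, it would suffice to rescale $W$ affinely onto $[-1,1]$ --- where the $\asymp\lambda$ points become equispaced up to a bounded factor --- and adapt the extremal construction underlying Lemma~\ref{polynomials} in degree $m$, producing a polynomial bounded by $\mathcal{O}(1)$ on those points and of size $\gtrsim a^{m^{2}/N}\ge a^{\lambda}$ there (using $m^{2}/N\asymp m^{2}/\lambda\ge m\ge\lambda$). The trouble is that such a transplant grows exponentially in $m$ just outside the short window $W$, hence is far too large on the remaining samples, and above all on the $\asymp\lambda$ samples lying inside $W$ itself --- exactly the region where $p$ is forced to be large. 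Overcoming this is in essence the Coppersmith--Rivlin argument localised by the Jacobi clustering rate: one must build $p$ so that it oscillates across the $\asymp\lambda$ gaps in $W$, dipping back to size $\lesssim1$ at each of the $\asymp\lambda$ interior samples (affordable precisely because $\lambda\le m$ leaves enough free degrees) while still reaching size $\gtrsim a^{\lambda}$ in a gap near $x=1$, and is held $\lesssim1$ on $[-1,1-T^{*}]$ by comparison with the Chebyshev polynomial of that interval. It is this balance --- $\asymp\lambda$ constraints to dodge inside a window of endpoint size $\asymp\sqrt{T^{*}}$ --- that pins down the exponent $\tfrac{1}{1+2\gamma}$ appearing in $\lambda$. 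With such a $p$ in hand, dividing by $\|p\|_{n,\infty}$ gives $B(n,m)\ge\eta_1\nu_1^{\lambda}$ with $\nu_1>1$ and $\eta_1>0$ depending only on $\alpha,\beta$ (and $c_1,c_2$), uniformly over $1\le m\le n-1$ once $n$ is large, the boundedly many small‑$n$ configurations being absorbed by shrinking $\eta_1$.
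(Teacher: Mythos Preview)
Your scaling heuristics are correct and you have even isolated the genuine difficulty: the construction of the extremal polynomial $p$.  But that construction is precisely what you do not supply—you flag it as ``the heart of the matter'' and ``the main obstacle'' and then describe only what $p$ ought to do, not what $p$ is.  Without it this is a plan, not a proof.

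The paper quotes this lemma from \cite{adcock2019optimal} without reproducing the argument, but its own proof of the random analogue (Lemma~\ref{polynomials_random}) uses exactly the missing construction, so one can compare.  The key idea is \emph{not} to localise to a window $W$ and then try to patch a transplanted equispaced extremal together with a Chebyshev envelope on $[-1,1-T^{*}]$—as you rightly note, any such transplant blows up just outside $W$.  Instead one starts with the degree-$m$ Chebyshev polynomial $q(x)=\cos(m\arccos x)$ of the \emph{full} interval $[-1,1]$, with zeros $y_0<y_1<\cdots<y_{m-1}$, and simply swaps its first $K\asymp\lambda$ zeros (those nearest the clustering endpoint) for the corresponding ordered sample points, setting
\[
p(x)\;=\;\tfrac12\,q(x)\prod_{j=0}^{K}\frac{x-x_{(j)}}{x-y_j}\,,\qquad x_{(0)}:=-1.
\]
The comparison $x_{(j)}\ge y_j$ for $1\le j\le K$ (exactly the count that produces $K\asymp\lambda$) makes each ratio $\le1$ at every sample point $x_{(k)}$ with $k>K$, and the $j=0$ factor is $\le2$ there, so $\|p\|_{n,\infty}\le1$; while at the extremal abscissa $x^{*}=-\cos(\pi/m)$ between $y_0$ and $y_1$ one has $|q(x^{*})|=1$ and the product is bounded below by $\big(K^{K}/K!\big)^{(1+2\gamma)/(1+\gamma)}\gtrsim e^{cK}$, giving $B(n,m)\gtrsim\nu_1^{\lambda}$.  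No window, no subinterval Chebyshev polynomial, no separate treatment of exterior samples—the single root-swap already controls both regimes at once.  Your verbal specification of what $p$ should do is accurate; it is this explicit formula that realises it.
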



\section{The maximal behavior of polynomials bounded on random sampling points}
\label{3}

In this section, our focus is on investigating the behavior of $B(n,m)$ defined as follows:
$$
B(n,m) = \sup_{\substack{p \in \mathbb{P}_m \ \|p\|_{n, \infty} \neq 0}}{\frac{\|p\|_\infty}{\|p\|_{n,\infty}}}.
$$
This analysis is conducted considering i.i.d random sampling points $\{x_i\}_{i=1}^{n} \subset X = [-1,1]$. By leveraging the insights from Lemma \ref{polynomials} and Lemma \ref{polynomials——deter}, we can understand the influence of random sampling rates on $B(n,m)$, which allows us to further determine the behavior of $\kappa_{2}(P_m^n)=D(n,m)$. Ultimately, this investigation contributes to addressing Question \uppercase\expandafter{\romannumeral1} effectively.

\subsection{The lower bound of $B(n,m)$}

The following is the foundational result in this section that gives exponential lower bound of $B(n,m)$. We present its proof in Section 3.2.

\begin{lemma}
    \label{polynomials_random}
     Assume that $\{x_i\}_{i=1}^{n}\subset X=[-1,1]$ are i.i.d. random samples with respect to a modified Jacobi weight function with parameters $\alpha,\beta>-1$ and $c_1\leq g(x)\leq c_2$ almost everywhere. If $\gamma := \max\{\alpha, \beta\} > -1/2$, then there exist constants $\eta>0$, $\nu>1$ depending on $\gamma$, such that for any $s \in \mathbb{R}_{+}$ and all $1 \leq m < n$ with $n>(2e^2\bar{c})^{1/s}$ where $\bar{c} = \frac{c_2 2^{\min\{\alpha, \beta\}}}{1+\gamma}$, we have
     \begin{equation*}
     \label{maximal_behavior}
         B(n,m) \geq \eta \,\nu^\lambda, \quad \lambda = \left(\frac{m^{2(1+\gamma)}}{n^{1+s}}\right)^{\frac{1}{1+2\gamma}},
     \end{equation*}
     with probability at least $1-\frac{2e^2\bar{c}}{n^s}$.
\end{lemma}

By combining Lemma \ref{re_norm} and Lemma \ref{polynomials_random}, we can derive the following results for $D(n,m)$ directly. It should be pointed out that these results are applicable to sampling based on the \emph{Jacobi} weight functions
\begin{equation}
    \label{Jacobi_ee}
    w(x) = c(1-x)^\alpha(1+x)^\beta, \quad \text{with} \quad c = \left(\int_{-1}^{1}(1-x)^\alpha(1+x)^\beta {\rm d}x\right)^{-1},\;\;\alpha, \beta > -1
\end{equation}
rather than the modified ones.
\begin{corollary}
    \label{L2_behavior}
    Assume that $\{x_i\}_{i=1}^{n}\subset X=[-1,1]$ are i.i.d. random samples with respect to a Jacobi weight function with parameters $\alpha,\beta>-1$. If $\gamma := \max\{\alpha, \beta\} > -1/2$, then there exist constants $\eta>0$, $\nu>1$ depending on $\alpha,\beta$, such that for any $s >0$ and all $1 \leq m < n$ with $n>(2e^2\bar{c})^{1/s}$ where $\bar{c} = \frac{c \cdot 2^{\min\{\alpha, \beta\}}}{1+\gamma}$ with the constant $c$ from \eqref{Jacobi_ee}, we have
    $$
    D(n,m) 
    \geq \frac{\eta}{m^{1+\gamma}} \nu^\lambda, \quad \lambda = \left(\frac{m^{2(1+\gamma)}}{n^{1+s}}\right)^{\frac{1}{1+2\gamma}},
    $$
    with probability at least $1-\frac{2e^2\bar{c}}{n^s}$.
\end{corollary}

By combining Corollary \ref{L2_behavior} with Theorem \ref{cohen}, we obtain the following result.

\begin{corollary}
    \label{bound_iff_cond}
    Assume that $\{x_i\}_{i=1}^{n}\subset X=[-1,1]$ are i.i.d. random samples with respect to a Jacobi weight function with parameters $\alpha,\beta>-1$. If $\gamma = \max\{\alpha, \beta\} > -1/2$, then the necessary and sufficient condition for $D(n,m) \lesssim 1$ with high probability is 
    $$
    n \asymp m^{2(1+\gamma)}
    $$
    without considering the logarithmic factor of $n$.
\end{corollary}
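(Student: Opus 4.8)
The plan is to prove the two implications separately, drawing the ``sufficient'' direction from Theorem \ref{cohen} and the ``necessary'' direction from Corollary \ref{L2_behavior}, and gluing them together with the estimate $K(m+1)\asymp m^{2(1+\gamma)}$ recorded in \eqref{below_use}. Throughout one uses that, under the scaling $n\asymp m^{2(1+\gamma)}$ (up to logs), $\log m\asymp\log n$, so the two are interchangeable inside logarithmic factors.

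For \textbf{sufficiency}, suppose $n\gtrsim m^{2(1+\gamma)}\log n$, i.e.\ $n/\log n\gtrsim m^{2(1+\gamma)}$. Fix any $r>0$; since $K(m+1)\asymp m^{2(1+\gamma)}$ by Lemma \ref{norm_relation}, for all sufficiently large $n$ the hypothesis \eqref{condition} of Theorem \ref{cohen} is met, namely $K(m+1)\le\iota\, n/\log n$ with $\iota=(1-\log 2)/(2+2r)$. Theorem \ref{cohen} then yields $|||\mathbf{G}-\mathbf{I}|||\le\tfrac12$ with probability at least $1-2n^{-r}$; on this event $\lambda_{\min}(\mathbf{G})\ge\tfrac12$, and Lemma \ref{eq} gives $D(n,m)=\lambda_{\min}(\mathbf{G})^{-1/2}\le\sqrt2$. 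As $1-2n^{-r}\to1$, this is exactly $D(n,m)\lesssim1$ with high probability.

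For \textbf{necessity} I would argue by contraposition: assume $n$ lies below the threshold in the sense that $n(\log m)^{1+2\gamma}=o(m^{2(1+\gamma)})$, and show that $D(n,m)\lesssim1$ with high probability fails. Apply Corollary \ref{L2_behavior}: for every constant $C>2e^2\Bar{c}$, with probability at least $1-\tfrac{2e^2\Bar{c}}{C}$ one has $D(n,m)\ge\frac{\eta}{m^{1+\gamma}}\nu^{\lambda}$ with $\lambda=\bigl(m^{2(1+\gamma)}/(Cn)\bigr)^{1/(1+2\gamma)}$. The sub-threshold hypothesis forces $\lambda/\log m\to\infty$, hence $\lambda\log\nu-(1+\gamma)\log m\to\infty$ and so $\frac{\eta}{m^{1+\gamma}}\nu^{\lambda}\to\infty$; thus $D(n,m)$ exceeds any prescribed constant with probability at least $1-\tfrac{2e^2\Bar{c}}{C}$ for all large $n$. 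Letting $C\to\infty$ drives this probability to $1$, so no fixed constant $K$ can satisfy $P\bigl(D(n,m)\le K\bigr)\to1$; that is, $D(n,m)\lesssim1$ with high probability is impossible. Combining the two implications, the threshold for $D(n,m)\lesssim1$ with high probability sits at $n\asymp m^{2(1+\gamma)}$ up to a polylogarithmic factor, which is the assertion.

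The sufficiency half and the bookkeeping through Lemmas \ref{norm_relation} and \ref{eq} are routine. The delicate point, and the main (though modest) obstacle, is in the necessity half: one must (i) verify that the exponential factor $\nu^{\lambda}$ dominates the polynomial factor $m^{-(1+\gamma)}$ in precisely the regime ``$n$ below $m^{2(1+\gamma)}$ up to logs,'' which amounts to pinning down the admissible power of $\log n$; and (ii) upgrade the fixed-confidence bound of Corollary \ref{L2_behavior} (valid with probability $1-\tfrac{2e^2\Bar{c}}{C}$) into a genuine ``with high probability'' failure by sending $C\to\infty$, which is legitimate exactly because $\Bar{c}$ is an absolute constant not depending on $m$ or $n$.
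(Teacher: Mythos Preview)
Your proposal is correct and follows essentially the same approach as the paper: sufficiency via Theorem \ref{cohen} together with the estimate $K(m+1)\asymp m^{2(1+\gamma)}$ from \eqref{below_use}, and necessity via Corollary \ref{L2_behavior}. The paper's own proof is in fact more terse---it simply says ``necessity is already revealed in Corollary \ref{L2_behavior}'' without the contraposition analysis or the $C\to\infty$ step---so your added detail (pinning down the log power needed for $\nu^{\lambda}$ to dominate $m^{-(1+\gamma)}$, and upgrading the fixed-confidence bound to probability tending to $1$) only strengthens the argument.
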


\begin{proof}[Proof of Corollary \ref{bound_iff_cond}]
    Necessity is already revealed in Corollary \ref{L2_behavior}. Indeed, let $n \asymp m^{1/\tau}$, i.e., there exist positive constants $c_1$ and $c_2$ such that $c_1 n^\tau \leq m \leq c_2 n^\tau$ holds for all large $n$. If $\tau > \frac{1}{2(1+\gamma)}$, there exists a constant $s \in \mathbb{R}_{+}$ satisfying $2(1+\gamma)\tau-1-s>0$. Then it directly follows from Corollary \ref{L2_behavior} that, there exist constants $\eta>0$, $\nu>1$ depending on $\alpha,\beta$, such that
    $$
    D(n,m) 
    \geq \frac{\eta}{c_2^{1+\gamma}n^{(1+\gamma)\tau}} \nu^{\lambda_1}, \quad \lambda_1 = \left(c_1^{2(1+\gamma)}n^{2(1+\gamma)\tau-1-s}\right)^{\frac{1}{1+2\gamma}},
    $$
    for all large $n$, with probability at least $1-\frac{2e^2\bar{c}}{n^s}$, where $\bar{c} = \frac{c \cdot 2^{\min\{\alpha, \beta\}}}{1+\gamma}$ with the constant $c$ from \eqref{Jacobi_ee}. Therefore, in order to have $D(n,m) \lesssim 1$ with high probability, it is required that $\tau \leq \frac{1}{2(1+\gamma)}$. This confirms the necessity.
    
    We next show that this condition, up to an additional logarithmic factor of $n$, is sufficient. According to Theorem \ref{cohen}, it is enough to prove 
    \begin{equation}
        \label{cohn_cond}
        K(m+1) \leq C\,m^{2(1+\gamma)},
    \end{equation}
    for some constant $C$ independent of $m$, when $\{L_j\}_{j=1}^{m+1}$, as defined in \eqref{quantity}, is an orthonormal basis of $\mathbb{P}_m$ on $[-1,1]$ with respect to the Jacobi weight function. 
    In fact, if condition \eqref{cohn_cond} holds, then combining \eqref{cohn_cond} with the condition $n \asymp m^{2(1+\gamma)}$ disregarding both the constant factor and the factor of $\log n$, we can deduce by Theorem \ref{cohen} that for any function $f:X\rightarrow \mathbb{R}$, 
    $
    \|P_m^n(f)\| \leq \sqrt{6}\; \|f\|_{n,2}
    $
    with high probability. 
    It follows that the condition number for $P_m^n$ as in \eqref{cond_2} satisfies 
    $
    \kappa_2(P_m^n) \lesssim 1
    $
    with high probability. Note that $ \kappa_{2}\left(P_m^n\right) = D(n,m)$ almost surely, as shown in Lemma \ref{eq}. Therefore, $D(n,m) \lesssim 1$ also holds with high probability.
    
    The establishment of \eqref{cohn_cond} has been included in the demonstration of Lemma \ref{norm_relation} (see \eqref{below_use}). Thus, we complete the proof.
\end{proof}

\subsection{The proof of Lemma \ref{polynomials_random}}


The relationship between the behavior of polynomials described in Lemma \ref{polynomials_random} and the distribution of random sampling points is intricate. It requires an investigation into the theory of order statistics. Hence, before we delve into the proof of Lemma \ref{polynomials_random}, we introduce Lemma \ref{prob_estimate} as a preliminary step. This lemma is of  importance in establishing Lemma \ref{polynomials_random}.


\begin{lemma}
    \label{prob_estimate} 
     Assume that $\{x_i\}_{i=1}^{n}\subset X=[-1,1]$ are i.i.d. random samples with respect to a modified Jacobi weight function with parameters $\alpha,\beta>-1$ and $c_1\leq g(x)\leq c_2$ almost everywhere.
    Let $x_{(1)} \leq x_{(2)} \leq \cdots \leq x_{(n)}$ be the corresponding order statistics. Then, for any $s \in \mathbb{R}_{+}$ and $n>(2e^2\bar{c}_1)^{1/s}$ where $\bar{c}_1 = \frac{c_2 2^\alpha}{1+\beta}$, we have
    $$
    P\left(x_{(1)} \geq \left(\frac{1}{n^{1+s}}\right)^{\frac{1}{1+\beta}}-1, x_{(2)} \geq \left(\frac{2}{n^{1+s}}\right)^{\frac{1}{1+\beta}}-1, \ldots, x_{(n)} \geq \left(\frac{n}{n^{1+s}}\right)^{\frac{1}{1+\beta}}-1 \right) \geq 1-\frac{2e^2\bar{c}_1}{n^s}.
    $$
    
\end{lemma}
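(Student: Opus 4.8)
The plan is to control each order statistic $x_{(k)}$ individually and then take a union bound. Fix $k \in \{1,\dots,n\}$ and set $t_k := \left(\frac{k}{Cn}\right)^{1/(1+\beta)} - 1$, so the target event is $\bigcap_{k=1}^n \{x_{(k)} \geq t_k\}$. The complement of the $k$-th event, $\{x_{(k)} < t_k\}$, says that at least $k$ of the $n$ i.i.d.\ samples fall in the left interval $[-1, t_k)$. First I would estimate $q_k := \PP(x_1 \in [-1,t_k))$ by integrating the modified Jacobi weight: since $g \le c_2$ and $(1-x)^\alpha \le 2^\alpha$ on $[-1, t_k)$ (using that $t_k < 1$ for large $n$, and that we only care about the factor near the endpoint $-1$ where $\beta$ governs the clustering), one gets
\[
q_k \;\le\; c_2 2^\alpha \int_{-1}^{t_k} (1+x)^\beta\, dx \;=\; \frac{c_2 2^\alpha}{1+\beta}\,(1+t_k)^{1+\beta} \;=\; \Bar{c}_1 \cdot \frac{k}{Cn}.
\]
So each sample lands in the bad region with probability at most $\bar c_1 k/(Cn)$.

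Next I would bound $\PP(x_{(k)} < t_k) = \PP(\text{Binomial}(n, q_k) \geq k)$. With $np := n q_k \le \bar c_1 k / C$, this is a moderate-deviation event: the mean is at most $\bar c_1 k/C$ while we ask for at least $k$ successes, and since $C > 2 e^2 \bar c_1$ the ratio is comfortably below $1$. The clean tool is the union-bound / Chernoff-type estimate $\PP(\text{Bin}(n,p) \geq k) \leq \binom{n}{k} p^k \leq \left(\frac{enp}{k}\right)^k \leq \left(\frac{e\bar c_1}{C}\right)^k$. Then I would sum over $k$:
\[
\PP\!\left(\bigcup_{k=1}^n \{x_{(k)} < t_k\}\right) \;\leq\; \sum_{k=1}^{n} \left(\frac{e\bar c_1}{C}\right)^{k} \;\leq\; \frac{e\bar c_1/C}{1 - e\bar c_1/C}.
\]
Since $C > 2e^2 \bar c_1 > 2 e \bar c_1$, the denominator exceeds $1/2$, giving a bound of at most $2 e \bar c_1 / C \le 2 e^2 \bar c_1 / C$, which is exactly the claimed slack. (The slightly lossy replacement of $2e\bar c_1/C$ by $2e^2\bar c_1/C$ keeps the constant uniform with the statement; alternatively one can carry $2e\bar c_1/C$ throughout.) Taking complements yields the lemma.

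The main obstacle is making the bound on $q_k$ genuinely rigorous and uniform in $k$: I must check that $t_k \in (-1, 1)$ for all $k \le n$ once $n$ is large (true because $\frac{k}{Cn} \le \frac1C < 1$, so $(k/(Cn))^{1/(1+\beta)} < 1$ and $t_k < 0 < 1$), that the factor $(1-x)^\alpha$ on $[-1,t_k)$ is bounded by a constant absorbed into $\bar c_1$ (it is at most $2^\alpha$ when $\alpha \ge 0$, and is bounded on the relevant sub-interval when $-1 < \alpha < 0$ since we stay away from $x=1$), and that the ``sufficiently large $C$'' and ``sufficiently large $n$'' hypotheses are used only where stated. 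A secondary point is bookkeeping the geometric-series tail so that the final constant matches $\frac{2e^2\bar c_1}{C}$ rather than some larger multiple; this is where the condition $C > 2e^2\bar c_1$ (rather than merely $C > e\bar c_1$) is spent. Everything else is routine: the order-statistic/Binomial identity $\{x_{(k)} < t\} = \{\#\{i : x_i < t\} \ge k\}$ and the elementary bound $\binom{n}{k} \le (en/k)^k$.
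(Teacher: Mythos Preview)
Your proposal is correct and shares the paper's overall architecture: bound the complement by a union over $k$, estimate $q_k = F(t_k) \le \bar c_1 k/(Cn)$ via the crude bounds on $g$ and $(1-x)^\alpha$, control the binomial tail $\PP(\mathrm{Bin}(n,q_k)\ge k)$, and sum a geometric series. The genuine difference lies in the binomial-tail step. You invoke the subset union bound $\PP(\mathrm{Bin}(n,p)\ge k)\le \binom{n}{k}p^k$ together with $\binom{n}{k}\le (en/k)^k$, landing directly on $(e\bar c_1/C)^k$. The paper instead keeps the full sum $\sum_{j\ge k}\binom{n}{j}q_k^{\,j}$, bounds it by $\sum_{j\ge k}(nq_k)^j/j!$, applies the Lagrange remainder of $e^x$ to obtain $\frac{1}{k!}(\bar c_1 k/C)^k e^{\bar c_1 k/C}$, and then uses Stirling to reach $(e^2\bar c_1/C)^k$. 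Your route is shorter and yields the sharper per-$k$ constant (one power of $e$ rather than $e^2$); the paper's extra factor of $e$ is precisely what forces the hypothesis $C>2e^2\bar c_1$ instead of the $C>2e\bar c_1$ your argument actually requires. Both arguments conclude with the same geometric-series summation.
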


\begin{proof}
    We have
    \begin{equation}\label{eq:budeng}
    \begin{aligned}        
        & P\left(x_{(1)} \geq \left(\frac{1}{n^{1+s}}\right)^{\frac{1}{1+\beta}}-1, x_{(2)} \geq \left(\frac{2}{n^{1+s}}\right)^{\frac{1}{1+\beta}}-1, \ldots, x_{(n)} \geq \left(\frac{n}{n^{1+s}}\right)^{\frac{1}{1+\beta}}-1 \right)  \\
        & = 1-P\left(\exists \; k \in \{1,\ldots,n\} \;\; \text{s.t.} \;\; 
        x_{(k)} < \left(\frac{k}{n^{1+s}}\right)^{\frac{1}{1+\beta}}-1 \right)  \\
        & = 1-P\left(\underset{k=1}{\overset{n}{\cup}}\left\{x_{(k)} < \left(\frac{k}{n^{1+s}}\right)^{\frac{1}{1+\beta}}-1\right\}
         \right) \\
        & \geq 1 - \sum_{k=1}^{n}{P\left({x_{(k)} \leq \left(\frac{k}{n^{1+s}}\right)^{\frac{1}{1+\beta}}}-1\right)}.
    \end{aligned}
    \end{equation}
    
    Now we focus on $P\left({x_{(k)} \leq \left(\frac{k}{n^{1+s}}\right)^{\frac{1}{1+\beta}}}-1\right)$, $k = 1,\dotsc,n$. 
    It's important to note that the cumulative distribution function of $x_{(k)}$ is given by 
    $$
    F_{x_{(k)}}(u) = \sum_{j=k}^{n}{\tbinom{n}{j}(F(u))^j(1-F(u))^{n-j}}, \quad u \in [-1,1],
    $$
    for all $k=1, \dotsc, n$.
    Here, 
    $$
    F(u) = \int_{-1}^{u}g(x)(1-x)^\alpha(1+x)^\beta {\rm d}x,
    $$
    represents the cumulative distribution function of $x_j$; for further details, please refer to \cite[p. 9]{david2004order}.
    Then it follows that
    \begin{equation}\label{single_prob}
    \begin{aligned}
        P\left(x_{(k)} \leq \left(\frac{k}{n^{1+s}}\right)^{\frac{1}{1+\beta}}-1\right)&=  F_{x_{(k)}}\left(\left(\frac{k}{n^{1+s}}\right)^{\frac{1}{1+\beta}}-1\right)\\
        & = \sum_{j=k}^{n}{\frac{n!}{j!(n-j)!}\left(F\left(\left(\frac{k}{n^{1+s}}\right)^{\frac{1}{1+\beta}}-1\right)\right)^j\left(1-F\left(\left(\frac{k}{n^{1+s}}\right)^{\frac{1}{1+\beta}}-1\right)\right)^{n-j}} \\
        & \leq \sum_{j=k}^{n}{\frac{n!}{j!(n-j)!}\left(F\left(\left(\frac{k}{n^{1+s}}\right)^{\frac{1}{1+\beta}}-1\right)\right)^j}.
    \end{aligned}
    \end{equation}
    Combining \eqref{single_prob} and
    \begin{align*}
        F\left(\left(\frac{k}{n^{1+s}}\right)^{\frac{1}{1+\beta}}-1\right) 
        & = \int_{-1}^{\left(\frac{k}{n^{1+s}}\right)^{1/(1+\beta)}-1}g(x)(1-x)^\alpha(1+x)^\beta {\rm d}x \; \\
        & \leq \; c_2 2^\alpha \int_{-1}^{\left(\frac{k}{n^{1+s}}\right)^{1/(1+\beta)}-1}(1+x)^\beta {\rm d}x 
        = \frac{\bar{c}_1k}{n^{1+s}},
    \end{align*}
    we obtain that
   \begin{equation}\label{eq:pxk}
    \begin{aligned}
        P\left(x_{(k)} \leq \left(\frac{k}{n^{1+s}}\right)^{\frac{1}{1+\beta}}-1\right)
        & \leq \sum_{j=k}^{n}{\frac{n!}{j!(n-j)!}\left(\frac{\bar{c}_1k}{n^{1+s}}\right)^j} 
        = \sum_{j=k}^{n}{\frac{n(n-1)\cdots(n-j+1)}{n^j}\cdot\frac{1}{j!}\left(\frac{\bar{c}_1k}{n^s}\right)^j} \\
        & \leq \sum_{j=k}^{n}{\frac{1}{j!}\left(\frac{\bar{c}_1k}{n^s}\right)^j} 
        \leq \sum_{j=k}^{\infty}{\frac{1}{j!}\left(\frac{\bar{c}_1k}{n^s}\right)^j} \\
        & \overset{(a)}{\leq} \frac{1}{k!}\cdot \left(\frac{\bar{c}_1k}{n^s}\right)^k\cdot e^\frac{\bar{c}_1k}{n^s} 
        = \frac{1}{k!}\cdot \left(\frac{k}{e}\right)^k\cdot\left(\frac{\bar{c}_1}{n^s}\right)^k\cdot e^{k(\bar{c}_1/n^s+1)} \\
        & \overset{(b)}{\leq} \left(\frac{\bar{c}_1}{n^s}\right)^ke^{k(\bar{c}_1/n^s+1)} 
        \overset{(c)}{\leq} \left(\frac{e^{2}\bar{c}_1}{n^s}\right)^k.
    \end{aligned}
   \end{equation}  
   Here, in the inequality (a), we utilize the remainder term estimation of order $k-1$ in the Taylor expansion $e^{\left(\frac{\bar{c}_1k}{n^s}\right)} = \sum_{j=k}^{\infty}{\frac{1}{j!}\left(\frac{\bar{c}_1k}{n^s}\right)^j}$ of the function $e^x$ at $x=\frac{\bar{c}_1k}{n^s}$. And the inequality (b) is deduced from the non-asymptotic version of bounds based on Stirling's formula (see \cite{robbins1955remark}): 
    \begin{equation}
        \label{Stirling}
        \sqrt{2\pi k}\left(\frac{k}{e}\right)^ke^{\frac{1}{12k+1}}   <   k!    < \sqrt{2\pi k}\left(\frac{k}{e}\right)^ke^{\frac{1}{12k}}, \quad k \geq 1.
    \end{equation}
    The inequality (c) is due to $n^s>2e^2\bar{c}_1$.
    
    Combining \eqref{eq:pxk} and \eqref{eq:budeng}, we obtain that 
    \begin{equation*}
    \begin{aligned}
        & P\left(x_{(1)} \geq \left(\frac{1}{n^{1+s}}\right)^{\frac{1}{1+\beta}}-1, x_{(2)} \geq \left(\frac{2}{n^{1+s}}\right)^{\frac{1}{1+\beta}}-1, \ldots, x_{(n)} \geq \left(\frac{n}{n^{1+s}}\right)^{\frac{1}{1+\beta}}-1 \right) \\
        & \geq 1 - \sum_{k=1}^{n}{\left(\frac{e^{2}\bar{c}_1}{n^s}\right)^k}
        = 1 - \frac{\frac{e^2\bar{c}_1}{n^s}\left(1-\left(\frac{e^{2}\bar{c}_1}{n^s}\right)^n\right)}{1-\frac{e^2\bar{c}_1}{n^s}} \geq 1 - \frac{2e^{2}\bar{c}_1}{n^s},
    \end{aligned}
    \end{equation*}
    where in the last inequality we use $n^s>2e^2\bar{c}_1$ again.
\end{proof}

\begin{remark}
    Based on the symmetry of the sampling interval $[-1,1]$, we can employ a similar approach to establish that for any $s>0$ and $n>(2e^2\bar{c}_2)^{1/s}$ where 
    $\bar{c}_2 = \frac{c_2 2^\beta}{1+\alpha}$, the following inequality holds:
    $$
    P\left(x_{(n)} \leq 1-\left(\frac{1}{n^{1+s}}\right)^{\frac{1}{1+\beta}}, x_{(n-1)} \leq 1-\left(\frac{2}{n^{1+s}}\right)^{\frac{1}{1+\beta}}, \ldots, x_{(1)} \leq 1-\left(\frac{n}{n^{1+s}}\right)^{\frac{1}{1+\beta}} \right) \geq 1-\frac{2e^2\bar{c}_2}{n^s}.
    $$
\end{remark}

We are now prepared to present the proof of Lemma \ref{polynomials_random}, which follows the proof principles established by Coppersmith and Rivlin 
\cite{coppersmith1992growth}
 and further developed in the work of  Adcock, Platte, and Shadrin  \cite{adcock2019optimal}.

\begin{proof}[Proof of Lemma \ref{polynomials_random}]
    Considering the symmetry of the sampling interval $[-1,1]$, we can assume, without loss of generality, that $\beta \geq \alpha$. We will now proceed to divide the proof into two cases.
    
    \textbf{Case \romannumeral1}. We first consider the case where 
    $\left(\frac{m^{2(1+\beta)}}{n^{1+s}}\right)^{\frac{1}{1+2\beta}} > (2\pi^2)^{\frac{1+\beta}{1+2\beta}} =: \hat{c}$. Let
    $$
    K := \left\lfloor \frac{1}{(2\pi^2)^{\frac{1+\beta}{1+2\beta}}}\left(\frac{m^{2(1+\beta)}}{n^{1+s}}\right)^{\frac{1}{1+2\beta}}\right \rfloor
    $$
    and observe that $1 \leq K \leq m-1$, as $m \leq n-1$. Assuming that the inequality
    \begin{equation}
        \label{compare_i}
        x_{(j)} \geq \left(\frac{j}{n^{1+s}}\right)^{\frac{1}{1+\beta}}-1, \quad j = 1,\dotsc,K,
    \end{equation}
    has been established, we can further deduce the inequality
    \begin{equation}
        \label{compare_y}
        x_{(j)} \geq y_j, \quad j = 1,\dotsc,K,
    \end{equation}
    where 
    $$
    y_j = -\cos{\frac{\pi(2j+1)}{2m}}, \quad j=0,\dotsc,m-1
    $$
    are the zeros of the Chebyshev polynomial
    $
    q(x) := \cos{(m\arccos{(x)})}
    $
   which has a degree of $m$.
   Now, let's provide a brief explanation of how \eqref{compare_y} is obtained.
    By combining the inequalities
    \begin{equation}
    \label{yj_ineq}
        y_j = -\cos{\frac{\pi(2j+1)}{2m}} \leq \frac{\pi^2(2j+1)^2}{8m^2} - 1 \leq \frac{2\pi^2j^2}{m^2} - 1, \quad j \geq 1,
    \end{equation}
   and \eqref{compare_i}, we can derive that $x_{(j)} \geq y_j$ as long as 
    $$
    \frac{2\pi^2j^2}{m^2} \leq \left(\frac{j}{n^{1+s}}\right)^{\frac{1}{1+\beta}},
    $$
    which can be simplified to
    $$
    j \leq \left\lfloor \frac{1}{(2\pi^2)^{\frac{1+\beta}{1+2\beta}}}\left(\frac{m^{2(1+\beta)}}{n^{1+s}}\right)^{\frac{1}{1+2\beta}}\right\rfloor=K.
    $$
    Set
    $$
    p(x) := \frac{1}{2}q(x)\prod_{j=0}^{K}\frac{x-x_{(j)}}{x-y_j}\in \mathbb{P}_m,
    $$
    where $x_{(0)} := -1$.
    We claim that 
    \begin{equation}\label{eq:pxj}
    p(x_{(k)}) \leq 1, \quad k = 1,\dotsc,n.
    \end{equation}
    In fact, for $k = 1,\dotsc,K$, we have $p(x_{(k)}) = 0$. For $k = K+1,\dotsc,n$,
    using the inequality $\abs{q(x)}\leq 1$ for $x\in[-1,1]$ and \eqref{compare_y}, we obtain
    $$
    \abs{p(x_{(k)})} \leq \frac{1}{2}\prod_{j=0}^{K}\frac{x_{(k)}-x_{(j)}}{x_{(k)}-y_j} \leq \frac{1}{2}\frac{x_{(k)}-x_{(0)}}{x_{(k)}-y_0}\leq 1.
    $$
    Here, we use
    $$
    \frac{x_{(k)}-x_{(0)}}{x_{(k)}-y_0} \leq \frac{y_K+1}{y_K-y_0} = 1+ \frac{1+y_0}{y_K-y_0} = 1+ \frac{\sin^2{(\pi/(4m))}}{\sin{((K+1)\pi/(2m))\sin{(K\pi/(2m))}}} \leq 1 + \frac{\pi^2}{16K(K+1)} \leq 2
    $$
    where we apply the inequalities $2t/\pi \leq \sin{(t)} \leq t$ for $0 \leq t \leq \pi/2$. Therefore, we have established \eqref{eq:pxj}.
    
    To estimate $\|p\|_\infty$, we evaluate $p$ at the point
    $$
    x^* =  -\cos{\frac{\pi}{m}},
    $$
    which lies midway between $y_0$ and $y_1$. It is evident that $\abs{q(x^*)} = 1$. Hence, we obtain that
    \begin{equation*}
    \begin{aligned}
        \|p\|_\infty 
         & \geq |p(x^*)| 
         = \frac{1}{2}|q(x^*)|\prod_{j=0}^{K}\frac{|x^*-x_{(j)}|}{|x^*-y_j|}  \\
        & = \frac{1}{2}\frac{x^*-(-1)}{x^*-y_0}\prod_{j=1}^{K}\frac{x_{(j)}-x^*}{y_j-x^*}  \overset{(a)}{\geq} \frac{1}{2}\prod_{j=1}^{K}\frac{x_{(j)}+1}{y_j+1}  \overset{(b)}{\geq} \frac{1}{2}\prod_{j=1}^{K}\frac{\left(\frac{j}{n^{1+s}}\right)^{\frac{1}{1+\beta}}}{\frac{2\pi^2j^2}{m^2}}  \\
        & \geq \frac{1}{2}\prod_{j=1}^{K}\left(\frac{j^{\frac{1}{1+\beta}}}{j^2} \left(\frac{1}{(2\pi^2)^{\frac{1+\beta}{1+2\beta}}}\left(\frac{m^{2(1+\beta)}}{n^{1+s}}\right)^{\frac{1}{1+2\beta}}\right)^{\frac{1+2\beta}{1+\beta}}\right) \\
        & \geq \frac{1}{2}\prod_{j=1}^{K}{\left(\frac{K}{j}\right)^{\frac{1+2\beta}{1+\beta}}}  = \frac{1}{2}\left(\frac{K^K}{K!}\right)^{\frac{1+2\beta}{1+\beta}}  \\
        & \overset{(c)}{\geq} \frac{1}{2}\left(\frac{e^K}{e^{\frac{1}{12}}\sqrt{2\pi K}}\right)^{\frac{1+2\beta}{1+\beta}}  \geq \eta_1\frac{\left(e^{\frac{1+2\beta}{1+\beta}}\right)^K}{K^{\frac{1+2\beta}{2(1+\beta)}}} 
         \geq \eta_2\nu^{\left(\frac{m^{2(1+\beta)}}{n^{1+s}}\right)^{\frac{1}{1+2\beta}}}, 
    \end{aligned}
    \end{equation*}
    with some $\eta_1, \eta_2>0$ and $\nu>1$ only depending on $\gamma$. Here, the inequality (a) is derived from \eqref{compare_y}. The inequality (b) follows directly from \eqref{compare_i} and \eqref{yj_ineq}. And the inequality (c) is obtained by invoking the inequality
    $$
    \frac{k^k}{k!} > \frac{e^k}{e^{\frac{1}{12}}\sqrt{2\pi k}}, \quad \text{for} \quad k\geq 1,
    $$
    implied by \eqref{Stirling}.

    Observe that  \eqref{compare_i} is the precondition we have assumed.
    It is worth noting that Lemma \ref{prob_estimate} provides an estimation for the probability of equation \eqref{compare_i} being valid, which is not less than
     $1-\frac{2e^2\bar{c}_1}{n^s}$ where $\bar{c}_1 = \frac{c_2 2^\alpha}{1+\beta}$. Consequently, in the context of Case \romannumeral1, we can infer that
    $$
    B(n,m)
    \geq \eta_2\nu^{\left(\frac{m^{2(1+\beta)}}{n^{1+s}}\right)^{\frac{1}{1+2\beta}}}
    $$
    holds true with a probability of at least $1-\frac{2e^2\bar{c}_1}{n^s}$.
    
    \textbf{Case \romannumeral2}. For the scenario where $\left(\frac{m^{2(1+\beta)}}{n^{1+s}}\right)^{\frac{1}{1+2\beta}} \leq \hat{c}$, let's assign $\eta_3 = \nu^{-\hat{c}}$.  Consequently, we have
    $$
    B(n,m) 
    \geq 1
    \geq \eta_3 \nu^{\left(\frac{m^{2(1+\beta)}}{n^{1+s}}\right)^{\frac{1}{1+2\beta}}}.
    $$
By combining Case \romannumeral1\,\, and Case \romannumeral2 \,\, with $\eta = \min{\{\eta_2, \eta_3\}}$, we can conclude the proof.
\end{proof}


\section{The proof of Theorem \ref{opt_rates} and Corollary \ref{opt_sam_rat}}
\label{4}

In this section, we present the proof of Theorem \ref{opt_rates} and Corollary \ref{opt_sam_rat} which provides a positive answer to Question \uppercase\expandafter{\romannumeral1}. 

\begin{proof}[Proof of Theorem \ref{opt_rates}]
    Lemma \ref{eq} tells us that 
    \begin{equation}
        \label{lem23}
        \kappa_{2}\left(P_m^n\right) = D(n,m) \quad \text{almost surely}.
    \end{equation}
    And by Corollary \ref{L2_behavior}, we see that there exist constants $\eta_1>0$, $\nu_1>1$ depending on $\alpha, \beta$, such that for any $s \in \mathbb{R}_{+}$ andall $1 \leq m < n$ with $n>(2e^2\bar{c})^{1/s}$ where $\bar{c} = \frac{c \cdot 2^{\min\{\alpha, \beta\}}}{1+\gamma}$ with the constant $c$ from \eqref{Jacobi_e}, we have 
    $$
    D(n,m)
    \geq \frac{\eta_1}{m^{1+\gamma}} \nu_1^{\lambda_1}, \quad \lambda_1 = \left(\frac{m^{2(1+\gamma)}}{n^{1+s}}\right)^{\frac{1}{1+2\gamma}}, 
    $$
    with probability at least $1-\frac{2e^2\bar{c}}{n^s}$. Under the setting $n \asymp m^{1/\tau}$, there exist positive constants $c_1, c_2$ and a positive integer $n_0$, such that $c_1 n^\tau \leq m \leq c_2 n^\tau$ holds for all $n \geq n_0$. Hence, for any $s \in \mathbb{R}_{+}$ satisfying $s<2(1+\gamma)\tau-1$ and all $n \geq n_1:=\max\{n_0, (2e^2\bar{c})^{1/s}\}$, we have 
    \begin{equation}
        \label{cor_app}
        D(n,m) \geq \frac{\eta_1}{c_2^{1+\gamma}n^{(1+\gamma)\tau}} \nu_1^{\lambda_2}, \quad \lambda_2 = \left(c_1^{2(1+\gamma)}n^{2(1+\gamma)\tau-1-s}\right)^{\frac{1}{1+2\gamma}}, 
    \end{equation}
    with probability at least $1-\frac{2e^2\bar{c}}{n^s}$. Then we set $\eta = \eta_1/c_2^{1+\gamma}$, $\nu = \nu_1^{c_1^{2(1+\gamma)/(1+2\gamma)}}>1$. Combining \eqref{lem23} and \eqref{cor_app}, we arrive at the desired conclusion that 
    $$
    \kappa_{2}\left(P_m^n\right) \geq \frac{\eta}{n^{(1+\gamma)\tau}} \nu^{n^r}, \quad r = \frac{2(1+\gamma)\tau -1 -s}{1+2\gamma},
    $$
    for all $n \geq n_1$, with probability at least $1-\frac{2e^2\bar{c}}{n^s}$.
\end{proof}

\begin{proof}[Proof of Corollary \ref{opt_sam_rat}]
    Using Lemma \ref{eq} again, we obtain that $\kappa_{2}\left(P_m^n\right) 
    = D(n,m)$ almost surely. And Corollary \ref{bound_iff_cond} indicates that, when $\gamma = \max\{\alpha, \beta\} > -1/2$, the necessary and sufficient condition for $D(n,m) \lesssim 1$ with high probability is 
    $$
    n \asymp m^{2(1+\gamma)}
    $$
    disregarding the logarithmic factor of $n$. By combining these two findings, we complete the proof.
\end{proof}

\section{The proof of Theorem \ref{impossibility_3}}
\label{5}

In this section, we focus on Question II concerning the probability measure with respect to the modified Jacobi weight functions (see Section \ref{Jacobi}). We present the proof of Theorem \ref{impossibility_3}.
\begin{proof}[Proof of Theorem \ref{impossibility_3}]
    Define the occurrence of the inequalities \eqref{undefine} and \eqref{impos_result} as events A and B respectively. We first consider the case where $P(A) = P_0 = 1$.

    Let $E_\rho$  denote the Bernstein ellipse containing $E$ with a parameter $\rho>1$.     
    From \eqref{undefine}, we derive the following inequality:
    \begin{equation}
        \label{undefine_accuracy_2}
        \left\|f-\phi_n(f)\right\|_{\infty} \leq \frac{1}{2} \rho^{-\mu n^\tau}\|f\|_{E_\rho}, \quad n_0 \leq n < \infty, 
    \end{equation}
    for some sufficiently large $n_0$ depending on $M$, $\sigma$, $\tau$, and for a small constant $\mu>0$ depending on $\sigma$, $\rho$, provided $f\in B(E)$. Let $p_d \in \mathbb{P}_d$. Since $p_d \in B(E)$,
     we can substitute $p_d$ for $f$ in \eqref{undefine_accuracy_2}. 
      Additionally, a well-known result by Bernstein \cite{bernstein1912ordre} affirms that $\|p_d\|_{E_\rho} \leq \rho^d\|p_d\|_\infty$. Hence
    \begin{equation} \label{undefine_change}
    \begin{aligned}
        \|\phi_n(p_d)\|_\infty 
        & \geq \|p_d\|_\infty - \|p_d-\phi_n(p_d)\|_\infty \\
        & \geq \|p_d\|_\infty - \frac{1}{2} \rho^{-\mu n^\tau}\|p_d\|_{E_\rho}  \\
        & \geq \|p_d\|_\infty - \frac{1}{2} \rho^{d-\mu n^\tau}\|p_d\|_\infty  \\
        & \geq \frac{1}{2}\|p_d\|_\infty,
    \end{aligned}
    \end{equation}
    whenever $\rho^{d-\mu n^\tau} \leq 1$, i.e., $d \leq \lfloor \mu n^\tau \rfloor$, with $n \geq n_0$.
        
    Notice that \eqref{undefine} implies $\phi_n(0) = 0$. 
    In the definition of the \emph{condition number} $\kappa_\infty$ in \eqref{cond_inf}, we now consider $f = 0$ and $h = \delta\cdot p_d$ for any $\delta>0$.
    Subsequently, for all $n \geq n_0$, the following holds:
    \begin{equation}\label{11}
    \begin{aligned}
        \kappa_\infty(\phi_n) 
        & \geq \lim_{\delta \rightarrow 0{+}} \sup_{\substack{p_d \in \mathbb{P}_d, d \leq \lfloor \mu n^\tau \rfloor \\ \|p_d\|_{n, \infty} \neq 0}}{\frac{\|\phi_n(\delta\cdot p_d) - \phi_n(0)\|_\infty}{\|\delta\cdot p_d\|_{n,\infty}}}  \\
        & \stackrel{(a)}{\geq} \lim_{\delta \rightarrow 0{+}} \sup_{\substack{p_d \in \mathbb{P}_d, d \leq \lfloor \mu n^\tau \rfloor \\ \|p_d\|_{n, \infty} \neq 0}}{\frac{\|\delta \cdot p_d\|_\infty}{2\|\delta\cdot p_d\|_{n,\infty}}}  \\
        & = \sup_{\substack{p_d \in \mathbb{P}_d, d \leq \lfloor \mu n^\tau \rfloor \\ \|p_d\|_{n, \infty} \neq 0}}{\frac{\|p_d\|_\infty}{2\|p_d\|_{n,\infty}}}
        = \frac{1}{2}B(n,\lfloor \mu n^\tau \rfloor).
    \end{aligned}
    \end{equation}
    Here, the inequality $(a)$ is derived from \eqref{undefine_change}.
    Since $\gamma > -1/2$, Lemma \ref{polynomials_random} implies that there exist constants $\eta_1>0$, $\nu_1>1$ depending on $\gamma$, such that for any $s \in \mathbb{R}_{+}$ satisfying $s<2(1+\gamma)\tau-1$ and all $n \geq n_1:=\max\{n_0,(2e^2\bar{c})^{1/s}\}$ where $\bar{c} = \frac{c_2 2^{\min\{\alpha, \beta\}}}{1+\gamma}$,
    $$
    B(n,\lfloor \mu n^\tau \rfloor)
    \geq \eta_1 \nu_1^{\lambda_1}, \quad \lambda_1 = \left(\frac{\lfloor \mu n^\tau \rfloor^{2(1+\gamma)}}{n^{1+s}}\right)^{\frac{1}{1+2\gamma}},
    $$
    with probability at least $1-\frac{2e^2\bar{c}}{n^s}$. Since $\lfloor \mu n^\tau \rfloor > \frac{\mu}{2}n^\tau$ for $n \geq (2/\mu)^{1/\tau}$, it follows that
    \begin{equation}
        \label{22}
        B(n,\lfloor \mu n^\tau \rfloor)
        \geq \eta_1 \nu_1^{\lambda_2}, \quad \lambda_2 = \left((\mu/2)^{2(1+\gamma)}\,n^{2(1+\gamma)\tau-1-s}\right)^{\frac{1}{1+2\gamma}},
    \end{equation}
    for any $s \in \mathbb{R}_{+}$ satisfying $s<2(1+\gamma)\tau-1$ and all $n \geq n_2:=\max\{n_1,(2/\mu)^{1/\tau}\}$, with probability at least $1-\frac{2e^2\bar{c}}{n^s}$. Then we set $\eta=\eta_1/2$, $\nu = \nu_1^{(\mu/2)^{2(1+\gamma)/(1+2\gamma)}}>1$. Consequently, combining \eqref{11} with \eqref{22}, we obtain that \eqref{impos_result} holds for all $n \geq n_2$ with probability at least $1-\frac{2e^2\bar{c}}{n^s}$, i.e., 
    $$
    P(B|A)\geq 1-\frac{2e^2\bar{c}}{n^s}.    
    $$
    Therefore 
    $
    P(B) \geq P(AB)  = P(A)P(B|A) \geq 1-\frac{2e^2\bar{c}}{n^s}.
    $

    For the case where $P(A) = P_0 < 1$, we observe that
    $
    P(B) \geq P(AB) = P(A)P(B|A) \geq \left(1-\frac{2e^2\bar{c}}{n^s}\right)P_0,
    $
    thereby completing the proof.
\end{proof}

\section{Numerical Illustration}
\label{6}

In this section, we aim to present the relationship between the stability of $P_m^n$ measured by $\kappa_2(P_m^n)$ and the random sampling rates through  numerical experimental approach, and further illustrate our results. 

Since it has been stated in Lemma \ref{eq} that $\kappa_2(P_m^n) = D(n,m) = \left(\lambda_{\min}(\mathbf{G})\right)^{-1/2}$ almost surely where the matrix $\mathbf{G}$ is given by $\mathbf{G}=(\left<L_j, L_k\right>_n)_{j,k=1,\dotsc,m+1}$ with $\{L_j\}_{j=1}^{m+1}$ an orthonormal basis of $\mathbb{P}_m$ in the sense of $L^2(X,\rho_X)$, we calculate the values of $\left(\lambda_{\min}(\mathbf{G})\right)^{-1/2}$ under different combinations of $m$ and $n$ with $m<n$. In practical computations, when $\mathbf{G}$ is nearly singular to the extent that the computation becomes imprecise, we proceed by equating $\lambda_{\min}(\mathbf{G}) = 1 \times 10^{-13}$.

Three choices of the sampling measure $\rho_X$ based on the \emph{Jacobi} weight functions, which are characterized by the parameters $\alpha$ and $\beta$, are tested. Figure \ref{test} shows $\log_{10}{\kappa_2(P_m^n)}$ as $n$ and $m$ range from $1$ to $100$, with $m<n$. We take the average value of $\kappa_2(P_m^n)$ obtained from $100$ repeated random samplings. Points on the dashed line follow the sampling rate $n \asymp m^{2(1+\gamma)}$ where $\gamma = \max\{\alpha,\beta\}$. As seen in Figure \ref{test}, $\kappa_2(P_m^n) \lesssim 1$ can be maintained with high probability if the sampling rate is at least $n \asymp m^{2(1+\gamma)}$ (corresponding to the left side of the dashed line), otherwise $\kappa_2(P_m^n)$ grows exponentially, which is in accordance with our theory.

\begin{figure}[htb]
    \centering
    \includegraphics[width=0.32\textwidth]{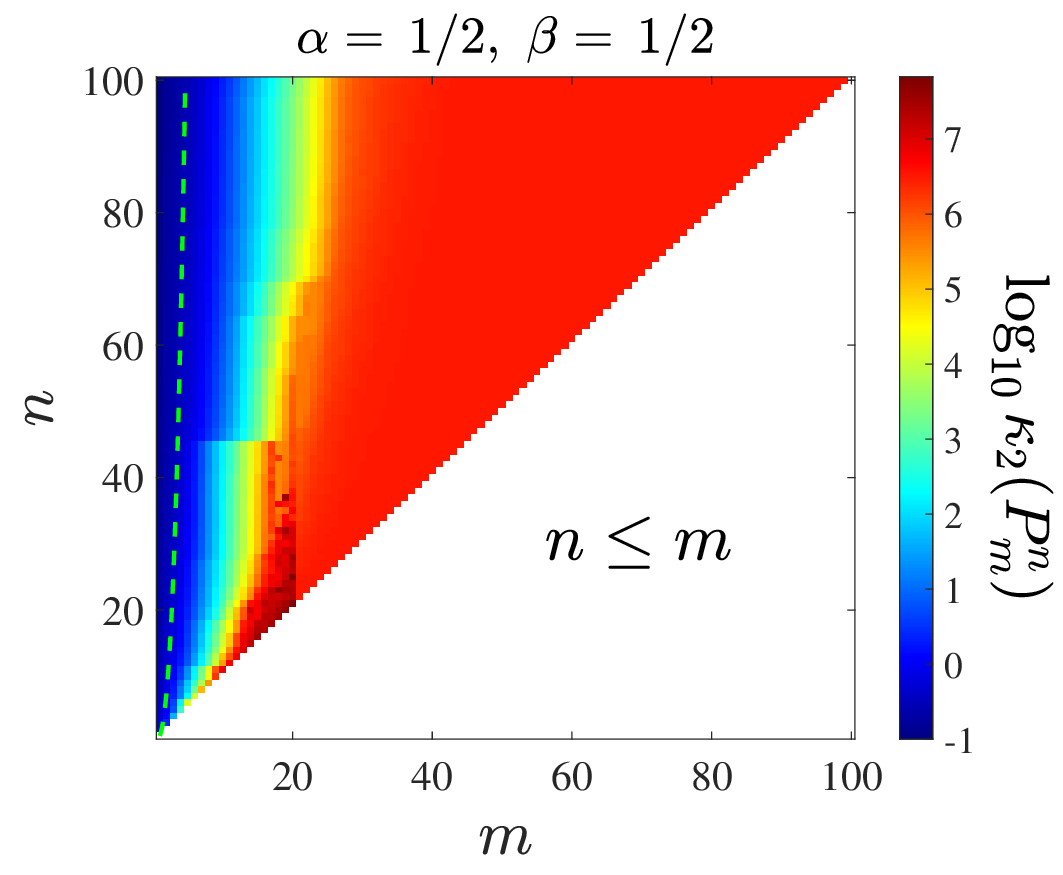} 
    \hspace{0.5mm}
    \includegraphics[width=0.32\textwidth]{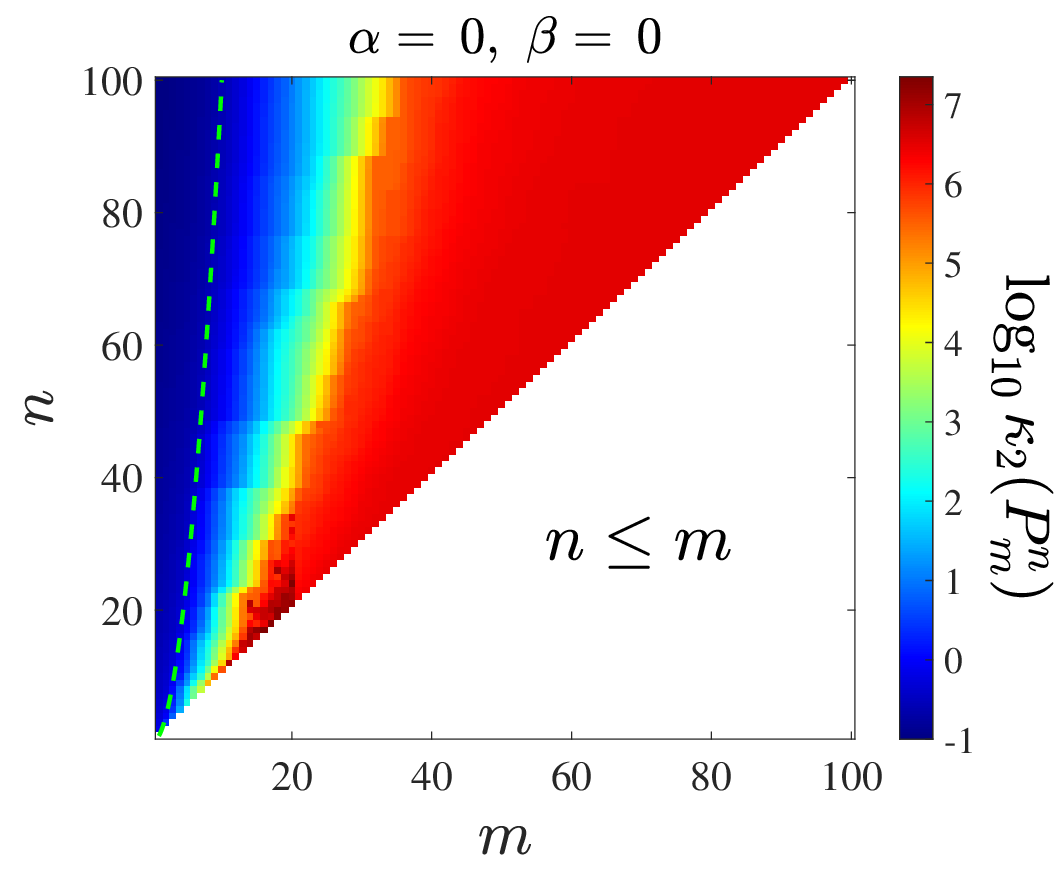} 
    \hspace{0.5mm}
    \includegraphics[width=0.32\textwidth]{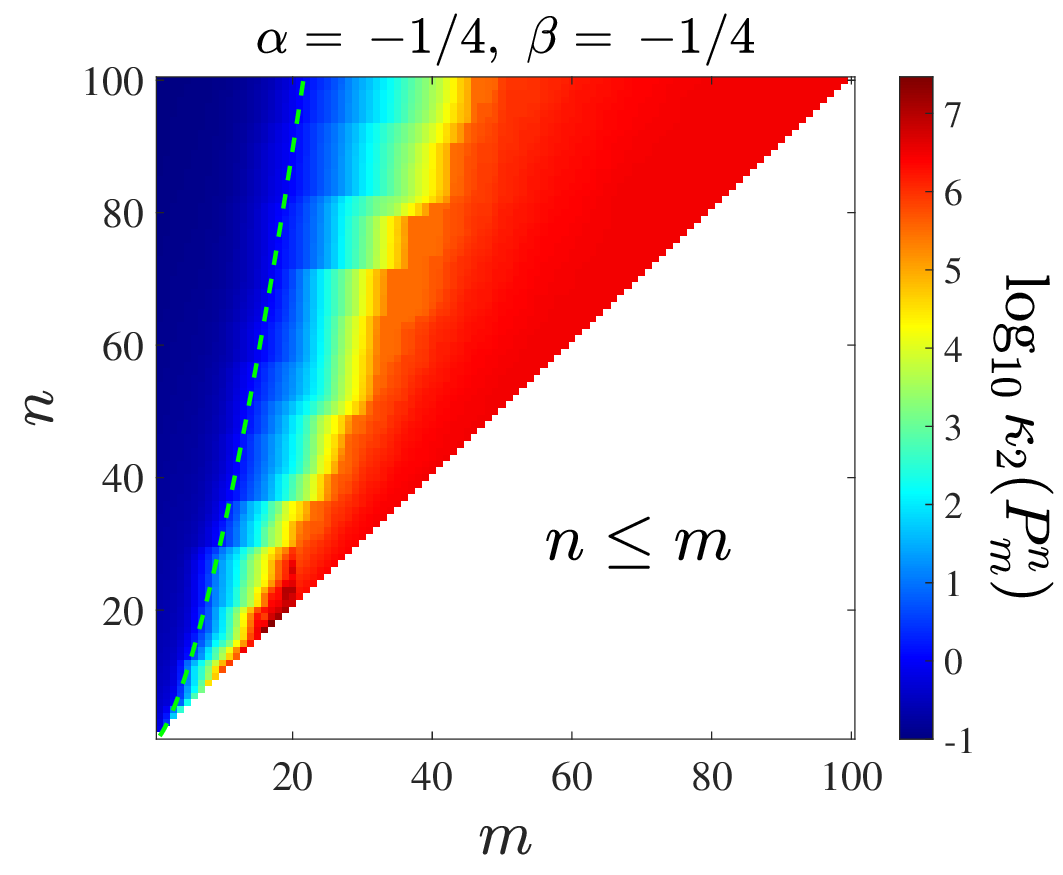} 
    \caption{Values of $\log_{10}{\kappa_2(P_m^n)}$ for three choices of the sampling measure $\rho_X$ with $\alpha=\beta=1/2$ (left), $\alpha=\beta=0$ (middle), $\alpha=\beta=-1/4$ (right). Dashed lines follow $n = m^{3}$ (left), $n = m^{2}$ (middle), $n = m^{3/2}$ (right) respectively.}
    \label{test}
\end{figure}




\makeatletter
\renewcommand\@biblabel[1]{#1.}
\makeatother

\end{document}